\documentclass[11pt]{article}
\usepackage[margin=1in]{geometry}
\usepackage{amsmath, amsfonts, amssymb, amsthm}
\usepackage{bm}
\usepackage{comment}
\usepackage{enumitem}
\usepackage{algorithm2e}
\usepackage{xcolor}
\usepackage{makecell}
\usepackage{hyperref}
\hypersetup{colorlinks,breaklinks,
    citecolor=[RGB]{153,204,153},
    linkcolor=[RGB]{102,153,204},
    urlcolor=[RGB]{115,140,191}
}
\SetKwInput{Initialization}{Initialization}
\SetKwInput{Bounding}{Bounding}
\SetKwInput{BoundingL}{Bounding the Left Node}
\SetKwInput{BoundingR}{Bounding the Right Node}
\SetKwInput{Branching}{Branching}
\SetKwInput{Pruning}{Pruning}
\SetKwInput{Termination}{Termination}
\SetKwInput{Output}{Output}
\SetKwInput{WarmStart}{Warm-Start}
\SetKwInput{return}{return}
\theoremstyle{plain}
\newtheorem{theorem}{Theorem}
\newtheorem{proposition}{Proposition}
\newtheorem{lemma}{Lemma}

\theoremstyle{definition}
\newtheorem{assumption}{Assumption}

\theoremstyle{plain}
\newtheorem{remark}{Remark}
\providecommand{\noopsort}[1]{}
\newcommand{\one}{{\mathbf 1}} 
\newcommand{\RR}{{\mathbb R}} 
\newcommand{\II}{{\mathbb I}}

\newcommand{\norm}[1]{\left\lVert#1\right\rVert}
\newcommand{\abs}[1]{\left\lvert#1\right\rvert}

\newcommand{\tpose}[1]{#1^{\top}}

\newcommand{\cF}{{\mathcal F}}
\newcommand{\cG}{{\mathcal G}}

\newcommand{\cP}{{\mathcal P}}

\newcommand{\ccr}{{\mathfrak r}}

\newcommand{\cY}{{\mathcal Y}}
\newcommand{\cX}{{\mathcal X}}
\newcommand{\cZ}{{\mathcal Z}}
\usepackage[sort&compress]{natbib}
 \bibpunct[, ]{[}{]}{,}{n}{}{,}%

\title{Sparsity Regularized and Robust Mean Variance Portfolio Selection Under Ellipsoidal Uncertainty}
\author{Deniz Akkaya\thanks{Department of Industrial Engineering, Bilkent University, Ankara, Türkiye}
\and Emre Can Yayla\footnotemark[1]
\and Buse \c{S}en\thanks{Risk Analytics and Optimization Chair, EPFL, Lausanne, Switzerland}
\and Mustafa Ç. Pınar\footnotemark[1]}
\date{}

\begin{document}
\maketitle

\begin{abstract}
We investigate mean–variance portfolio selection with an $\ell_0$-penalty to promote sparsity in asset allocations. Uncertainty in the mean return vector is incorporated through an ellipsoidal uncertainty set, yielding a robust sparse optimization framework. We characterize the structure of both local and global minimizers and exploit these properties in the risk minimization and return maximization formulations. Building on this structural insight, we develop a branch-and-bound algorithm tailored to the resulting robust sparse portfolio problems, together with a new pruning rule that can discard exponentially many candidate portfolios in a single step. Extensive computational experiments on real market data, together with comparisons against a mixed-integer second-order cone programming solver, demonstrate the effectiveness and competitiveness of the proposed approach.

\medskip
\noindent\textbf{Keywords:} Mean-variance portfolio, robust optimization, regularization, sparsity, $\ell_0$-norm, Branch-and-Bound

\medskip
\noindent\textbf{Mathematics Subject Classification (MSC 2020):} 90C11, 91G10, 90C17, 90C26, 90C57
\end{abstract}

\section{Introduction}\label{sec:Intro}
Mean-variance portfolio selection, originally introduced by \cite{Markowitz1952}, forms the cornerstone of modern portfolio theory. In this framework, an investor determines portfolio weights by balancing expected return against risk measured through variance, thereby tracing the efficient frontier. Despite its conceptual elegance and widespread adoption, the classical mean-variance model is known to be highly sensitive to estimation errors in the input parameters, particularly in the expected return vector. As emphasized in the literature, small perturbations in estimated returns may lead to large and economically unintuitive changes in optimal portfolios, a phenomenon often attributed to the amplification of estimation noise \cite{Michaud1989}.

To mitigate this instability, robust optimization has emerged as a systematic approach for incorporating parameter uncertainty directly into optimization models \cite{BenTalNemirovski1998,BenTalNemirovski2009,ElGhaouiOustryLebret1998}. In robust portfolio optimization, uncertain parameters such as expected returns are assumed to lie within prescribed uncertainty sets, and portfolio decisions are made to perform well under the worst-case realization within these sets. A prominent modeling choice is the use of ellipsoidal uncertainty sets for the mean return vector, which lead to tractable reformulations and admit appealing statistical interpretations. In particular, \cite{GoldfarbIyengar2003} demonstrated that robust mean-variance portfolio problems with ellipsoidal uncertainty can be reformulated as convex optimization problems and that the resulting portfolios exhibit improved stability and out-of-sample performance. Subsequent contributions, including \cite{BertsimasSim2004}, explored alternative uncertainty structures and their computational implications. In addition, distributionally robust optimization has provided a broader framework for modeling uncertainty in financial decision making, where ambiguity in probability distributions is explicitly incorporated \cite{DelageYe2010,GohSim2010,WiesemannKuhnSim2014}. Among various robust formulations, we follow the analysis of robust portfolio models developed in \cite{Pinar2016} as a foundation for our study.

While robustness addresses estimation risk, practical portfolio construction typically involves additional structural considerations. In many real-world applications, investors restrict the number of assets held in a portfolio due to transaction costs, liquidity constraints, monitoring costs, or regulatory requirements. Such considerations naturally lead to sparse portfolio models, where the number of nonzero positions is explicitly controlled. Early work on cardinality-constrained portfolio optimization highlighted the computational challenges arising from these restrictions \cite{ChangMeadeBeasleySharaiha2000,LoboFazelBoyd2007,BonamiLejeune2009}. More recently, sparsity-inducing formulations and scalable algorithms have attracted considerable attention in the literature. Contributions such as \cite{Brodie2009,BertsimasCoryWright2022,WuSunGeZhaoZeng2024,KobayashiTakanoNakata2023} demonstrate the practical and computational advantages of sparse portfolios. A direct way to enforce sparsity is through cardinality constraints or $\ell_0$-regularization, where the $\ell_0$-term counts the number of selected assets. However, the inclusion of such a term leads to nonconvex and combinatorial optimization problems that are challenging to solve. In \cite{SenAkkayaPinar2025}, a comprehensive analysis of sparse portfolio optimization is presented, including structural properties of optimal portfolios and an efficient enumeration-based branching algorithm. In addition, recent studies continue to explore models that combine sparsity and robustness in portfolio construction \cite{ZhaoJiangYang2023,AnnunziataEtAl2025}.

Despite the extensive literature on robust portfolio optimization and the growing body of work on sparse portfolio selection, the integration of ellipsoidal mean uncertainty with exact $\ell_0$-regularization remains relatively unexplored. Most robust formulations focus primarily on mitigating estimation risk but do not explicitly control portfolio cardinality. Conversely, sparse portfolio models often rely on deterministic parameter estimates and do not explicitly account for uncertainty in expected returns. A unified treatment that simultaneously addresses estimation risk and enforces exact sparsity leads to a challenging class of robust mixed-integer quadratic optimization problems. Motivating this integration, preliminary out-of-sample tests on real equity market data show that adding ellipsoidal robustness to a sparse mean-variance model can improve risk-adjusted performance over the purely sparse benchmark.

In this paper, we study sparse mean-variance portfolio selection under ellipsoidal uncertainty in the mean return vector. We consider both risk minimization and return maximization variants and incorporate an $\ell_0$-penalty to promote portfolios with a prescribed level of sparsity. The ellipsoidal uncertainty set is consistent with the robust optimization framework developed in \cite{Pinar2016,BenTalNemirovski2009,GoldfarbIyengar2003}, while the $\ell_0$-regularization follows the exact sparsity-inducing perspective studied in \cite{Nikolova2013,AkkayaPinar2020,AkkayaPinar2025} and recent sparse portfolio optimization research presented in \cite{SenAkkayaPinar2025}.

Our contributions are both theoretical and computational. On the theoretical side, we characterize the structure of local and global minimizers of the robust sparsity-penalized problem. We analyze how the interaction between the quadratic risk term, the worst-case mean adjustment induced by the ellipsoidal uncertainty set, and the discontinuous $\ell_0$-penalty determines the structure of optimal portfolios. These results extend structural analyses known for deterministic sparse mean-variance models to a robust setting and clarify the role of the robustness parameter in shaping portfolio composition.

On the computational side, we develop a tailored branch-and-bound framework that exploits problem-specific lower and upper bounds derived from the analytical structure of the model. These bounds enable effective pruning of candidate supports and provide a solid basis for warm-start heuristics. We conduct extensive computational experiments on real financial data and benchmark our approach against mixed-integer second-order conic formulations. The numerical results show that the proposed method is computationally effective, especially on larger instances, and often produces high-quality sparse robust portfolios with substantially reduced running times.

Overall, the paper contributes both structural analysis and algorithmic developments for robust sparse mean-variance portfolio optimization. The main contributions of this paper are as follows.
\begin{itemize}
\item We propose a robust sparse mean-variance portfolio model that integrates ellipsoidal uncertainty in expected returns with exact $\ell_0$-regularization, providing a unified treatment of robustness and sparsity.
\item We develop a structural analysis of the resulting nonconvex problem, including support-wise decomposition, identification of local minimizers and results on existence and properties of global minimizers.
\item We establish explicit bounds on optimal solutions and show that sparsity levels can be controlled via the regularization parameter through thresholding results.
\item We design a tailored branch-and-bound algorithm that turns these structural insights into bounding and warm-start strategies, and show on real data that it is computationally effective. A key ingredient is an additional pruning step that refines the enumeration scheme of~\cite{SenAkkayaPinar2025} for the nonrobust sparse portfolio problem: it can eliminate exponentially many subproblems in a single iteration and applies verbatim to that scheme.
\end{itemize}
We introduce the sparsity-regularized robust portfolio problem in two alternative formulations and present the necessary background and notation in Section~\ref{probdef}. Analytical results for both variants are presented in Sections~\ref{theory1} and \ref{theory2}, where we study structural properties of locally and globally optimal portfolios, including bounds on the number of nonzero entries and existence results. A branch-and-bound algorithm is proposed in Section~\ref{BBalgo}. Finally, we compare our algorithm with a state-of-the-art solver for mixed-integer second-order cone programming problems in Section~\ref{computation} and conclude in Section~\ref{conclusion}.
\section{Problem Definition and Notation}\label{probdef}
Let $\II_N=(\{1,\dots,N\},<)$ be the strictly ordered index set, where $<$ denotes the standard order. Any subset $\omega\subseteq\II_N$ inherits this property. We denote the all-ones vector by $\one$, the identity matrix by $\mathbf{I}$, and the zero vector by $\mathbf{0}$. We denote the $i^{th}$ column of a matrix $D$ by $d_i$. For $\omega\subseteq\II_N$, the following notation for subvectors and submatrices will be used:
\begin{align*}
	r_\omega:=(r[\omega[1]],\dots,r[\omega[\abs{\omega}]])\in\RR^{\abs{\omega}},
	\quad D_\omega:=\big((d_{\omega[1]})_\omega,\dots,(d_{\omega[\abs{\omega}]})_\omega\big) \in \RR^{\abs{\omega}\times\abs{\omega}}.
\end{align*}
We define the zero-padding operator $Z_\omega:\RR^{\abs{\omega}}\rightarrow\RR^N$ by
\begin{align*}
x=Z_\omega(x_\omega), \quad
x[i]=\begin{cases}
0,& i\notin\omega,\\
x_\omega[k],& \text{if } \omega[k]=i.
\end{cases}
\end{align*}
We introduce the indicator $\phi:\RR\rightarrow\{0,1\}$ defined by
\begin{align*}
\phi(t)=\begin{cases}
0, & t=0,\\
1, & t\neq 0,
\end{cases}
\quad \text{so that } \norm{x}_0:=\sum_{i\in\II_N} \phi(x[i])=\sum_{i\in\sigma(x)}\phi(x[i]).
\end{align*}
Here, $\norm{x}_0=\abs{\sigma(x)}$, where $\sigma(x)$ is the support of $x$ (indices of nonzero entries), and $\abs{\cdot}$ denotes cardinality.
For $x\in\RR^N$, the $\ell_p$ norm is defined for $1\leq p<\infty$ as $\norm{x}_p:=\left(\sum_{i\in\II_N} \abs{x[i]}^p\right)^{1/p}$, $\norm{x}_\infty=\max_{i\in\II_N}\{\abs{x[i]}\}$,  and the matrix induced norm by a positive definite matrix $D$ is $\norm{x}_D=\sqrt{\tpose{x}Dx}$. Given $\rho>0$, the open $\ell_p$ ball of radius $\rho$ centered at $x$ is $B_p(x,\rho):=\{y\in\RR^N:\norm{x-y}_p<\rho\}$. For a matrix $A\in\RR^{M\times N}$, the spectral norm is $\norm{A}_2=s_1(A)$, where $s_i(A)$ is the $i^{th}$ singular value in decreasing order. 

We consider a financial market consisting of $N$ risky assets and a single risk-free asset with deterministic period return $r_c$. The vector of expected returns of the risky assets, denoted by $r\in \RR^N$, is unknown, while their return covariance matrix $D\in\RR^{N\times N}$ is assumed to be known and positive definite. The initial wealth is normalized to one.
Uncertainty in the mean returns is modeled through the ellipsoidal uncertainty set
\begin{align*}
    U_{\hat{r}} := \left\{ r \in \RR^N : \norm{ r - \hat{r}}_{D^{-1}} \leq \gamma \right\},
\end{align*}
where $\hat{r}$ denotes the nominal estimate of the mean return vector and $\gamma>0$ is a tolerance parameter controlling the size of the uncertainty set. Let $\bar{r}$ denote a prescribed target return level.
The investor selects portfolio weights $x\in\RR^N$ for the risky assets and $x_c\in\RR$ for the risk-free asset so as to minimize portfolio variance while ensuring that the target return is achieved for all admissible realizations of the mean return vector. The resulting robust mean-variance portfolio optimization problem is formulated as
\begin{align*}
    \begin{array}{ll}
     \min  & \tpose{x} D x \\
     \text{s.t.}
& \tpose{\one} x + x_c = 1\\
& \tpose{r} x + r_c x_c \geq \bar r \quad \forall r \in U_{\hat{r}}\\
& (x,x_c) \in \RR^{N+1}.
    \end{array}
\end{align*}
Under ellipsoidal uncertainty in the mean return vector, the semi-infinite robust constraint can be reduced to a single deterministic inequality. In particular, requiring that the portfolio achieves the target return for all admissible realizations of $r$,
\begin{align*}
    \tpose{r} x + r_c(1-\tpose{\one} x) \geq \bar{r}
\quad \forall r \in U_{\hat{r}},
\end{align*}
is equivalent to enforcing that the nominal expected return, penalized by the worst-case deviation induced by the uncertainty set, exceeds the target level. This yields the deterministic robust counterpart
\begin{align*}
\tpose{\hat{r}} x + r_c(1-\tpose{\one} x) - \gamma \norm{x}_D \geq \bar{r},
\end{align*}
where the term $\gamma\norm{x}_D$ arises from minimizing the linear form $\tpose{r}x$ over the ellipsoidal set $U_{\hat{r}}$. Consequently, the original semi-infinite robust mean-variance problem admits the equivalent finite-dimensional formulation
\begin{align}
\label{eq:robustPortfolio}
\min_{x\in\mathbb{R}^N} \; \tpose{x}D x
\quad \text{s.t.} \quad
\tpose{\hat{r}} x + r_c(1-\tpose{\one} x) - \gamma \norm{x}_D \geq \bar r.
\end{align}
This equivalence follows from standard results in robust optimization, where linear constraints subject to ellipsoidal uncertainty admit exact deterministic robust counterparts involving norm penalties \cite{GoldfarbIyengar2003,BenTalNemirovski2009}.
Let $\beta>0$ be a sparsity-inducing penalty parameter, and define the estimated \emph{excess return vector} by
$\ccr := \hat{r} - r_c \one$ together with the \emph{excess target return} $\bar{\ccr} := \bar{r} - r_c$.
We consider the following \emph{sparse robust mean-variance portfolio selection} problem:
\begin{align}
\tag{$\cP^1$}\label{prob:cprmvp}
\min_{x\in\RR^N} \; \cF_\beta(x):= \tpose{x} D x + \beta \norm{x}_0
\quad \text{s.t.} \quad
\tpose{\ccr} x - \gamma \norm{x}_D \ge \bar{\ccr}.
\end{align}
An alternative robust formulation to the problem presented in (\ref{eq:robustPortfolio}) is obtained by reversing the roles of risk and return. Instead of minimizing variance subject to a robust return requirement, one may fix an admissible risk level and maximize the worst-case portfolio return. For a suitably chosen parameter $T>0$, consider
\begin{align*}
\max_{x\in\RR^N} \;
\min_{r\in U_{\hat{r}}} \Bigl\{ \tpose{r} x + (1-\tpose{\one} x) r_c \Bigr\}
\quad \text{s.t.} \quad
\tpose{x} D x \leq T^2 .
\end{align*}
This formulation represents a robust counterpart of the classical mean-variance problem with a variance budget, and it provides an alternative scalarization of the risk-return trade-off.
Exploiting the ellipsoidal structure of the uncertainty set $U_{\hat{r}}$, the inner minimization over $r$ admits a closed-form solution, yielding the deterministic equivalent
\begin{align*}
\max_{x\in\RR^N} \;
\tpose{\hat{r}} x + (1-\tpose{\one} x) r_c - \gamma \norm{x}_D
\quad \text{s.t.} \quad
\tpose{x} D x \leq T^2.
\end{align*}
Since $\tpose{\hat{r}} x + (1-\tpose{\one} x) r_c=\tpose{\ccr}x+r_c$, the objective can be written in terms of excess returns as
\begin{align*}
\max_{x\in\RR^N} \;
\tpose{\ccr} x + r_c - \gamma \norm{x}_D
\quad \text{s.t.} \quad
\norm{x}_D \leq T ,
\end{align*}
and the constant $r_c$ can be dropped without affecting the set of optimal solutions.

While this model captures the same robustness considerations as problem~\eqref{eq:robustPortfolio}, the two are not equivalent. The original formulation enforces the target return as a hard robust constraint and minimizes risk accordingly, whereas the present model fixes the risk level and optimizes the worst-case return. Consequently, the two problems generally yield different optimal solutions, coinciding only for specific choices of $T$ that recover the risk level induced by the optimal solution of problem~\eqref{eq:robustPortfolio}.

Using the same sparsity-inducing penalty parameter $\beta$ as before, we introduce an alternative formulation aimed at promoting sparser solutions to the return maximization problem. Specifically, we consider the following optimization problem:
\begin{align}
\tag{$\cP^2$}\label{prob:cprmvp2}
\min_{x\in\RR^N} \;\cG_\beta(x):=
\gamma \norm{x}_D - \tpose{\ccr} x + \beta\norm{x}_0
\quad \text{s.t.} \quad
\norm{x}_D \leq T .
\end{align}
In the following sections, we develop the theoretical foundations for both formulations. Our analysis covers local optimality conditions, rigorous bounds on the nonzero components of global minimizers, asymptotic results concerning the existence of global minimizers, and principled guidelines for selecting the parameter $\beta$ to achieve prescribed sparsity levels.

\section{Analysis of \texorpdfstring{\eqref{prob:cprmvp}}{P1}}\label{theory1}
We begin this section by introducing an assumption that rules out the degenerate solution $x=\mathbf{0}$, in which all wealth is invested in the risk-free asset.
\begin{assumption}\label{assum:targetwealth}
The target return $\bar{r}$ strictly exceeds the period return $r_c$ of the risk-free asset.
\end{assumption}
We impose the following assumption to ensure that each asset's nominal excess return is sufficiently large relative to its risk contribution and the level of mean uncertainty, thereby guaranteeing feasibility of the associated subproblems.
\begin{assumption}\label{assump:feas}
The uncertainty radius $\gamma$ satisfies
\begin{align*}
\min_{i\in\II_N}\frac{\abs{\ccr[i]}}{\sqrt{d_i[i]}}>\gamma,
\end{align*}
where $\sqrt{d_i[i]}$ denotes the square root of $i^{th}$ diagonal entry of $D$.
\end{assumption}
Assumption~\ref{assump:feas} requires the magnitude of each asset's nominal
excess return relative to its volatility to exceed the uncertainty radius
$\gamma$. Since short positions are permitted, this ensures that every
singleton-support subproblem, and hence every nonempty support-restricted
subproblem, is feasible.
For ease of notation, define the feasible set of~\eqref{prob:cprmvp} as
\begin{align}
	\cX=\left\{x\in\RR^N:\tpose{\ccr}x-\gamma \norm{x}_D \geq \bar{\ccr}\right\}.\label{eq:cX}
\end{align}
We also define $H := \sqrt{\tpose{\ccr} D^{-1}\ccr}$, which represents the maximal achievable Sharpe ratio in the market and coincides with the slope of the capital market line.
\begin{remark}\label{rem:whyworks}
Note that for $x\in\RR^N$ and $\omega\supseteq\sigma(x)$, we have
$\tpose{x}Dx=\tpose{x_\omega}D_\omega x_\omega$.
This property motivates searching for local minimizers by restricting to supports.
\end{remark}
For $\omega\subseteq\II_N$, define $K_\omega=\{x\in\RR^N: x[i]=0,\ \forall i\in\omega^c\}$. We study the following subproblem to characterize local minimizers of $\eqref{prob:cprmvp}$:
\begin{align}\tag{$\cP^1_\omega$}\label{prob:mvp-omega}
	\min_{x\in \cX\cap K_\omega}\tpose{x}Dx.
\end{align}
Since $D$ is positive definite, the objective is strictly convex, and hence the problem has a unique solution whenever feasible.
\subsection{Minimizers of \texorpdfstring{\eqref{prob:mvp-omega}}{P1w}}
We define the restricted feasibility set for a fixed $\omega\subseteq\II_N$:
\begin{align*}
	\cX_\omega&=\left\{u\in\RR^{\abs{\omega}}:\tpose{\ccr_\omega}u-\gamma \norm{u}_{D_\omega}\geq \bar{\ccr}\right\}.
\end{align*}
Using these sets and the zero-padding operator, we define the equivalent problem to \eqref{prob:mvp-omega} with a convex feasible region
\begin{align}\tag{$\cZ\cP^1_\omega$}\label{prob:zpmvp}
	\min_{u\in \cX_\omega}\tpose{u}D_\omega u, \quad \abs{\omega}\geq 1.
\end{align}
\begin{remark}\label{rem:stillposdiag}
Assumption~\ref{assump:feas} is inherited by all subproblems: for any nonempty $\omega\subseteq\II_N$, $D_\omega$ is positive definite (as a principal submatrix of $D$) and $|\ccr_\omega[k]|>\gamma \sqrt{D_\omega[k,k]}$ holds for all $k\in\II_{\abs{\omega}}$, since the diagonal of $D_\omega$ consists of diagonal entries of $D$.
\end{remark}
The following lemma and remark are crucial in establishing that Assumption~\ref{assump:feas} ensures the existence of feasible solutions for every subproblem associated with a subset $\omega \subseteq \II_N$.
\begin{lemma}\label{lem:Fenchel}
    For any $v\in \RR^N$, we have $\tpose{v}D^{-1}v=\max_{u\in\RR^N}\{2\tpose{u}v-\tpose{u}Du\}$.
\end{lemma}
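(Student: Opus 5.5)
The plan is to treat the right-hand side as the maximization of a strictly concave quadratic in $u$ and to solve it in closed form. Since $D$ is positive definite, the function $q(u):=2\tpose{u}v-\tpose{u}Du$ is strictly concave and continuously differentiable on $\RR^N$. Moreover, $q(u)\le 2\norm{u}_2\norm{v}_2-\lambda_{\min}(D)\norm{u}_2^2\to-\infty$ as $\norm{u}_2\to\infty$, where $\lambda_{\min}(D)>0$. Hence $q$ attains its maximum, and the maximizer is unique and characterized by the first-order condition $\nabla q(u)=2v-2Du=\mathbf{0}$.

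Because $D$ is invertible, the stationarity equation gives $u^\star=D^{-1}v$. Substituting back, we get
\begin{align*}
q(u^\star)=2\tpose{v}D^{-1}v-\tpose{v}D^{-1}DD^{-1}v=\tpose{v}D^{-1}v,
\end{align*}
where we use the symmetry of $D^{-1}$. This yields the claimed identity.

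As an alternative that avoids calculus, one can complete the square:
\begin{align*}
2\tpose{u}v-\tpose{u}Du=\tpose{v}D^{-1}v-\tpose{(u-D^{-1}v)}D(u-D^{-1}v).
\end{align*}
This identity can be checked by expanding the last term. The subtracted term is nonnegative by positive definiteness, and it equals zero exactly when $u=D^{-1}v$. The maximum value is therefore $\tpose{v}D^{-1}v$, and it is attained there. I would present this version, since it gives the inequality and its attainment in one line.

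No step here is a real obstacle. The only point requiring care is to use the symmetry of $D$ (and hence of $D^{-1}$) in the expansion, so that the cross terms $\tpose{u}v$ and $\tpose{v}u$ combine correctly.
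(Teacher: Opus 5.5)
Your proposal is correct, and the completing-the-square version you plan to present is exactly the paper's proof. The paper writes $Q(u)=-\tpose{(u-D^{-1}v)}D(u-D^{-1}v)+\tpose{v}D^{-1}v$ and uses positive definiteness to conclude that the maximum is attained at $u=D^{-1}v$, while your calculus-based variant is also valid but not needed.
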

\begin{proof}Let $v\in\RR^N$ be fixed, and define $Q(u)=2\tpose{u}v-\tpose{u}Du$. Then
\begin{align*}
    Q(u)=-\tpose{(u-D^{-1}v)} D (u-D^{-1}v) + \tpose{v}D^{-1}v.
\end{align*}
Since $D$ is positive definite, the first term is non-positive for every $u\in\RR^N$, with equality if and only if $u=D^{-1}v$. Therefore the maximum is attained at $u=D^{-1}v$, and the desired identity follows.
\end{proof}
\begin{remark}\label{rem:feasall}
Let $u\in\cX_\omega$, and write $u=tv$ with $t>0$ and $\norm{v}_{D_\omega}=1$. Then $t(\gamma-\tpose{\ccr_\omega}v)+\bar{\ccr}\leq 0$.
Since $\bar{\ccr}>0$, feasibility requires $\gamma-\tpose{\ccr_\omega}v<0$. This holds whenever
\begin{align*}
\max_{\norm{v}_{D_\omega}=1}\tpose{\ccr_\omega}v=\sqrt{\tpose{\ccr_\omega}(D_\omega)^{-1}\ccr_\omega}=:H_\omega>\gamma,
\end{align*}
where $H_\omega=\sqrt{\tpose{\ccr_\omega}(D_\omega)^{-1}\ccr_\omega}$ is the Sharpe ratio of the subproblem defined by $\omega$. To guarantee feasibility across all supports, it suffices to ensure $\min_{\omega\neq\emptyset}H_\omega>\gamma$.
For any $\omega\neq \emptyset$ and $i\in\omega$ we use Lemma~\ref{lem:Fenchel} by taking $u=te_i\in\RR^{\abs{\omega}}$ and obtain:
\begin{align*}
    H_\omega=\sqrt{\tpose{\ccr_\omega}(D_\omega)^{-1}\ccr_\omega}\geq \sqrt{2t\ccr_\omega[i]-t^2d_i[i]}.
\end{align*}
Maximizing the right hand side over $t$ yields $t=\ccr_\omega[i]/d_i[i]$ and we have
\begin{align*}
    \sqrt{\tpose{\ccr_\omega}(D_\omega)^{-1}\ccr_\omega}\geq \frac{\abs{\ccr_\omega[i]}}{\sqrt{d_i[i]}}\Rightarrow  \sqrt{\tpose{\ccr_\omega}(D_\omega)^{-1}\ccr_\omega}\geq \max_{i\in\omega} \frac{\abs{\ccr_\omega[i]}}{\sqrt{d_i[i]}}.
\end{align*}
In fact, the minimum is attained on singletons:
\begin{align*}
\min_{\omega\neq\emptyset}H_\omega=H_{\min}:=\min_{i\in\II_N}\frac{\abs{\ccr[i]}}{\sqrt{d_{i}[i]}}>\gamma.
\end{align*}
Assumption~\ref{assump:feas} provides this bound. Thus, strict feasibility and convexity ensure a unique solution for~\eqref{prob:zpmvp}.
\end{remark}
\begin{proposition}\label{prop:xi}
For nonempty $\omega \subseteq \II_N$, the unique solution of \eqref{prob:zpmvp} is
\begin{align*}
\xi(\omega):=\left(\frac{\bar{\ccr}}{H_\omega(H_\omega-\gamma)}\right)(D_\omega)^{-1}\ccr_\omega.
\end{align*}
\end{proposition}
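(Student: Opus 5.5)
Since the problem has already been shown (Remark~\ref{rem:feasall}) to be a strictly feasible convex problem with a strictly convex objective, I will solve it in two stages. First I fix the risk level $\norm{u}_{D_\omega}=s$ and find the best direction; then I optimize over $s$.

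Write any $u\neq\mathbf{0}$ as $u=tv$ with $t>0$ and $\norm{v}_{D_\omega}=1$. Note that $u=\mathbf{0}$ is infeasible because $\bar{\ccr}>0$ by Assumption~\ref{assum:targetwealth}. The objective equals $t^2$, and the constraint reads $t(\tpose{\ccr_\omega}v-\gamma)\ge\bar{\ccr}$. Minimizing $t$ therefore means choosing $v$ to maximize $\tpose{\ccr_\omega}v$ over the unit $D_\omega$-sphere.

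By Cauchy--Schwarz in the $D_\omega$ inner product, $\tpose{\ccr_\omega}v=\langle D_\omega^{-1}\ccr_\omega, v\rangle_{D_\omega}\le \norm{D_\omega^{-1}\ccr_\omega}_{D_\omega}=H_\omega$. Equality holds exactly when $v^*=D_\omega^{-1}\ccr_\omega/H_\omega$; this is the identity already used in Remark~\ref{rem:feasall}. Consequently, for any feasible $u=tv$,
\begin{align*}
t\ \ge\ \frac{\bar{\ccr}}{\tpose{\ccr_\omega}v-\gamma}\ \ge\ \frac{\bar{\ccr}}{H_\omega-\gamma}.
\end{align*}
The first inequality requires $\tpose{\ccr_\omega}v>\gamma$, which any feasible $v$ satisfies. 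The second uses $H_\omega>\gamma$ from Remark~\ref{rem:feasall}.

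Both inequalities are tight at $v=v^*$ and $t^*=\bar{\ccr}/(H_\omega-\gamma)$. The point
\begin{align*}
t^*v^*=\frac{\bar{\ccr}}{H_\omega(H_\omega-\gamma)}D_\omega^{-1}\ccr_\omega=\xi(\omega)
\end{align*}
is feasible and attains the lower bound $(t^*)^2$, so it is a minimizer. Uniqueness follows from the strict convexity noted earlier. It can also be read off directly: equality forces $v=v^*$ by the Cauchy--Schwarz equality case, and forces $t=t^*$.

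The only delicate step is the decomposition itself. One must rule out $u=\mathbf{0}$ (using $\bar{\ccr}>0$) and must only divide by $\tpose{\ccr_\omega}v-\gamma$ when it is positive (which the constraint guarantees). An alternative route would write the KKT conditions, $2D_\omega u=\lambda(\ccr_\omega-\gamma D_\omega u/\norm{u}_{D_\omega})$, observe that they force $u\parallel D_\omega^{-1}\ccr_\omega$, and then solve for the scale using the active constraint. The direct argument above avoids checking multiplier signs, so I would present it.
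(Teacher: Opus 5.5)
Your proof is correct, but it does not follow the route the paper relies on. The paper gives no argument of its own and defers to \cite[Proposition~1]{Pinar2016}. The proof of Proposition~\ref{prop:dual_solution}, which is said to parallel that argument, indicates it is a Lagrangian/KKT derivation. Strict feasibility from Remark~\ref{rem:feasall} supplies Slater's condition. Stationarity $2D_\omega u=\mu\bigl(\ccr_\omega-\gamma D_\omega u/\norm{u}_{D_\omega}\bigr)$ then forces $u$ to be a multiple of $D_\omega^{-1}\ccr_\omega$, and the active constraint fixes the scale.

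Your decomposition $u=tv$, combined with Cauchy--Schwarz in the $D_\omega$ inner product, instead gives a direct lower bound $\norm{u}_{D_\omega}\ge\bar{\ccr}/(H_\omega-\gamma)$ on the whole feasible set, attained at $\xi(\omega)$. This buys several things:
\begin{enumerate}
\item it is elementary and self-contained, with no constraint qualification;
\item it never touches the nondifferentiability of $\norm{\cdot}_{D_\omega}$ or the sign of a multiplier;
\item it proves the dual-norm identity $\max_{\norm{v}_{D_\omega}=1}\tpose{\ccr_\omega}v=H_\omega$, which Remark~\ref{rem:feasall} takes for granted;
\item it gets uniqueness from the equality case rather than from strict convexity;
\item it shows in passing that the constraint is active at the optimum, which is the first claim of Proposition~\ref{prop:dual_solution}.
\end{enumerate}

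What the KKT route offers in exchange is the optimal multiplier $\mu^*=2\bar{\ccr}/(H_\omega-\gamma)^2$ as part of the same computation, and the paper needs it for its Lagrangian-gradient branching rule. With your approach it must be recovered afterwards by substituting $\xi(\omega)$ into the stationarity condition. That is in fact exactly how the proof of Proposition~\ref{prop:dual_solution} obtains it, so your argument would slot into the paper without loss.
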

\begin{proof}
    For the proof, see \cite[Proposition~1]{Pinar2016}.
\end{proof}
\begin{remark}\label{rem:zpdef}
For $\omega\subseteq\II_N$ with $|\omega|\geq 1$, we write $\Xi(\omega)=Z_\omega(\xi(\omega))$ for its zero-padding to $\RR^N$.
\end{remark}
The next proposition gives the optimal multiplier of the subproblem constraint and shows that the constraint is active at optimality. The multiplier is used in the branching rule of the algorithm.
\begin{proposition}\label{prop:dual_solution}
For nonempty $\omega\subseteq \II_N$, the optimal Lagrange multiplier associated with the constraint of~\eqref{prob:zpmvp} is $\mu^* = \frac{2\bar{\ccr}}{(H_\omega - \gamma)^2}$.
Moreover, the constraint is active at the optimal solution $\hat u$, \textit{i.e.}, $\tpose{\ccr_\omega}\hat{u}-\gamma \norm{\hat{u}}_{D_\omega}= \bar{\ccr}$.
\end{proposition}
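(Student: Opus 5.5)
We plan to verify the KKT conditions of~\eqref{prob:zpmvp} directly at the explicit minimizer $\hat u=\xi(\omega)$ from Proposition~\ref{prop:xi}. The problem is convex: the objective is strictly convex, and the constraint function $g(u)=\bar{\ccr}-\tpose{\ccr_\omega}u+\gamma\norm{u}_{D_\omega}$ is convex. By Remark~\ref{rem:feasall} it is also strictly feasible, so Slater's condition holds. Hence there is a multiplier $\mu^*\ge 0$ with
\begin{align*}
2D_\omega\hat u+\mu^*\left(-\ccr_\omega+\gamma\frac{D_\omega\hat u}{\norm{\hat u}_{D_\omega}}\right)=\mathbf{0},\qquad \mu^* g(\hat u)=0.
\end{align*}
The norm is differentiable at $\hat u$, since $\hat u\neq\mathbf 0$: indeed $\bar{\ccr}>0$ and $\ccr_\omega\neq\mathbf 0$ by Assumption~\ref{assump:feas}. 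So the subgradient is just the gradient.

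\textbf{Activity.} We first show the constraint is active. Write $\hat u=c\,(D_\omega)^{-1}\ccr_\omega$ with $c=\bar{\ccr}/(H_\omega(H_\omega-\gamma))>0$. Then
\begin{align*}
\tpose{\ccr_\omega}\hat u=cH_\omega^2,\qquad \norm{\hat u}_{D_\omega}=cH_\omega,
\end{align*}
so
\begin{align*}
\tpose{\ccr_\omega}\hat u-\gamma\norm{\hat u}_{D_\omega}=cH_\omega(H_\omega-\gamma)=\bar{\ccr}.
\end{align*}
This gives the second claim. Alternatively, activity follows abstractly: if the constraint were slack, then $\hat u$ would be an unconstrained local minimizer of $\tpose{u}D_\omega u$, forcing $\hat u=\mathbf 0$, which is infeasible since $\bar{\ccr}>0$.

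\textbf{Multiplier.} Next we substitute into the stationarity equation. Since $D_\omega\hat u=c\,\ccr_\omega$ and $D_\omega\hat u/\norm{\hat u}_{D_\omega}=\ccr_\omega/H_\omega$, the equation reduces to a scalar multiple of $\ccr_\omega$:
\begin{align*}
\left(2c-\mu^*\left(1-\frac{\gamma}{H_\omega}\right)\right)\ccr_\omega=\mathbf 0.
\end{align*}
Because $\ccr_\omega\neq\mathbf 0$, this gives
\begin{align*}
\mu^*=\frac{2cH_\omega}{H_\omega-\gamma}=\frac{2\bar{\ccr}}{(H_\omega-\gamma)^2}.
\end{align*}
This is positive because $H_\omega>\gamma$. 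Since the multiplier satisfying the KKT system is uniquely determined by this scalar equation, it is \emph{the} optimal multiplier.

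The only delicate point is justifying that KKT multipliers exist and are unique. Existence comes from Slater's condition, using strict feasibility from Remark~\ref{rem:feasall}. Uniqueness comes from the nonvanishing of the constraint gradient $-\ccr_\omega+\gamma\ccr_\omega/H_\omega=-(1-\gamma/H_\omega)\ccr_\omega$. The rest is routine algebra.
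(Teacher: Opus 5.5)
Your proof is correct and follows essentially the same route as the paper: you substitute $D_\omega\hat u$ and $D_\omega\hat u/\norm{\hat u}_{D_\omega}$ from Proposition~\ref{prop:xi} into the KKT stationarity condition and read off $\mu^*$. The paper gets activity from the contradiction that a slack constraint forces $\hat u=\mathbf 0$, which you also give alongside your direct evaluation at $\xi(\omega)$; your Slater and nonvanishing-gradient arguments for existence and uniqueness of the multiplier fill in details the paper leaves implicit.
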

\begin{proof}
The proof parallels the argument in \cite[Proposition~1]{Pinar2016}. We also verify that the constraint $\tpose{\ccr_\omega}u-\gamma\norm{u}_{D_\omega}\ge\bar\ccr$ of~\eqref{prob:zpmvp} is active at optimality. Strict feasibility established above ensures that the KKT conditions apply.
Suppose that the constraint is inactive at $\hat u$. Complementary slackness
then gives $\mu^*=0$, and stationarity reduces to
$2D_\omega\hat u=0$. Since $D_\omega\succ0$, this implies $\hat u=0$,
which is infeasible because $\bar{\ccr}>0$ by
Assumption~\ref{assum:targetwealth}. Thus, the optimal solution $\hat{u}$ of \eqref{prob:zpmvp} satisfies $\tpose{\ccr_\omega}\hat{u}-\gamma \norm{\hat{u}}_{D_\omega}= \bar{\ccr}$.

Next, we show the dual multiplier in closed-form. By Proposition~\ref{prop:xi}, we have
\[
    D_\omega\hat u
    =\frac{\bar{\ccr}}{H_\omega(H_\omega-\gamma)}\ccr_\omega,
    \quad \text{and}\quad
    \frac{D_\omega\hat u}{\norm{\hat u}_{D_\omega}}
    =\frac{\ccr_\omega}{H_\omega}.
\]
Substituting these identities into the stationarity condition and rearranging the terms then yields  $\mu^*=\frac{2\bar{\ccr}}{(H_\omega-\gamma)^2}$.
 \end{proof}
The subproblem \eqref{prob:zpmvp} is posed in the reduced space $\RR^{\abs{\omega}}$. The following lemma confirms that solving it is equivalent to solving \eqref{prob:mvp-omega} in $\RR^N$.
\begin{lemma}\label{lemma5}
Problems \eqref{prob:zpmvp} and \eqref{prob:mvp-omega} are equivalent.
\end{lemma}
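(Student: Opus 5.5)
The plan is to show that the zero-padding operator $Z_\omega$ is a bijection from $\RR^{\abs{\omega}}$ onto $K_\omega$. We then check that it maps $\cX_\omega$ exactly onto $\cX\cap K_\omega$ and preserves objective values. Equivalence of \eqref{prob:zpmvp} and \eqref{prob:mvp-omega} then follows directly: optimal values coincide, and $u$ solves \eqref{prob:zpmvp} if and only if $Z_\omega(u)$ solves \eqref{prob:mvp-omega}. In particular, the unique minimizer of \eqref{prob:mvp-omega} is $\Xi(\omega)$, by Proposition~\ref{prop:xi}.

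\textbf{Step 1 (bijection).} For $x\in K_\omega$ we have $x=Z_\omega(x_\omega)$, since all entries outside $\omega$ vanish and the entries on $\omega$ are listed in the inherited order. Conversely, $(Z_\omega(u))_\omega=u$ for every $u\in\RR^{\abs{\omega}}$. Hence the restriction $x\mapsto x_\omega$ is the inverse of $Z_\omega$ on $K_\omega$.

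\textbf{Step 2 (identities).} Let $x=Z_\omega(u)$. Expanding the quadratic form entrywise gives
\begin{align*}
\tpose{x}Dx=\sum_{i,j\in\omega}x[i]D[i,j]x[j]=\tpose{u}D_\omega u,
\end{align*}
which is Remark~\ref{rem:whyworks}. Taking square roots yields $\norm{x}_D=\norm{u}_{D_\omega}$. Similarly, $\tpose{\ccr}x=\sum_{i\in\omega}\ccr[i]x[i]=\tpose{\ccr_\omega}u$. Therefore
\begin{align*}
\tpose{\ccr}x-\gamma\norm{x}_D=\tpose{\ccr_\omega}u-\gamma\norm{u}_{D_\omega},
\end{align*}
so $x\in\cX$ if and only if $u\in\cX_\omega$. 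Combined with Step 1, this gives $Z_\omega(\cX_\omega)=\cX\cap K_\omega$.

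\textbf{Step 3 (conclusion).} Since $Z_\omega$ is an objective-preserving bijection between the two feasible sets, any minimizer of one problem maps to a minimizer of the other, and the optimal values agree. Uniqueness transfers as well.

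No step is a real obstacle. The only point needing care is the index bookkeeping in Step 2: we must verify that the ordering $\omega[k]$ used to define $D_\omega$ and $\ccr_\omega$ matches the ordering used in $Z_\omega$. This holds because both are defined through the same ordered enumeration $\omega[1]<\dots<\omega[\abs{\omega}]$.
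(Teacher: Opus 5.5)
Your proposal is correct and follows the same route as the paper. The paper's proof simply asserts that $Z_\omega$ is a bijection from $\cX_\omega$ onto $\cX\cap K_\omega$ that preserves the quadratic objective. You add the details the paper leaves implicit: restriction to $\omega$ is the inverse of $Z_\omega$ on $K_\omega$, and the identities $\norm{Z_\omega(u)}_D=\norm{u}_{D_\omega}$ and $\tpose{\ccr}Z_\omega(u)=\tpose{\ccr_\omega}u$ justify the feasibility correspondence.
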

\begin{proof}
The map $Z_\omega:\cX_\omega\rightarrow\cX\cap K_\omega$ is a bijection. Moreover, for any $x_\omega\in\cX_\omega$, we have $\tpose{x_\omega}D_\omega x_\omega=\tpose{Z_\omega(x_\omega)}D Z_\omega(x_\omega)$. Hence the two formulations are equivalent.
\end{proof}
\begin{remark}
\label{rem:unique}
For $\omega\subseteq\II_N$ with $\abs{\omega}\geq 1$, the point $\Xi(\omega)\in\RR^N$ is the unique solution of \eqref{prob:mvp-omega}.
\end{remark}
\subsection{(Local) Minimizers of \texorpdfstring{\eqref{prob:cprmvp}}{P1}}
Since $\cF_\beta(x)=\tpose{x}Dx+
\beta\sum_{i\in\II_N}\phi(x[i])$, activating a zero component incurs an additional penalty $\beta$. The following proposition identifies a neighborhood in which this penalty cannot be offset by the change in the quadratic term of~\eqref{prob:cprmvp}.
\begin{proposition}\label{choiceofrho}
Let $\beta>0$ and $\hat{x}\in\cX$. Define $\hat{\sigma}=\sigma(\hat{x})$ and
\begin{align*}
\rho:=\min\left\{\min_{i\in\hat{\sigma}}|\hat{x}[i]|,\ \frac{\beta}{2(\|D\hat{x}\|_1+1)}\right\}.
\end{align*}
Then $\rho>0$, and:
\begin{enumerate}[label=(\roman*)]
\item If $y\in B_\infty(0,\rho)$, then
$\sum_{i\in\II_N}\phi(\hat{x}[i]+y[i])=\sum_{i\in\hat{\sigma}}\phi(\hat{x}[i])+\sum_{i\in\hat{\sigma}^c}\phi(y[i])$.
\item If $y\in B_\infty(0,\rho)\cap(\RR^N\setminus K_{\hat{\sigma}})$, then $\cF_\beta(\hat{x}+y)\geq \cF_\beta(\hat{x})$, with strict inequality whenever $\hat{\sigma}^c\neq\emptyset$.
\end{enumerate}
\end{proposition}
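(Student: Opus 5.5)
First, $\rho>0$: $\hat\sigma$ is finite and every $\abs{\hat x[i]}$ with $i\in\hat\sigma$ is positive, so the first term of the minimum is positive. The second term is positive because $\beta>0$ and $\norm{D\hat x}_1+1\ge 1$. If $\hat\sigma=\emptyset$, the first minimum is over an empty set and is read as $+\infty$, so $\rho$ is just the second term.

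For (i), take $y\in B_\infty(0,\rho)$ and split $\II_N$ into $\hat\sigma$ and $\hat\sigma^c$. For $i\in\hat\sigma$ we have $\abs{y[i]}<\rho\le\abs{\hat x[i]}$, so $\hat x[i]+y[i]\neq 0$ and $\phi(\hat x[i]+y[i])=1=\phi(\hat x[i])$. For $i\in\hat\sigma^c$ we have $\hat x[i]=0$, so $\phi(\hat x[i]+y[i])=\phi(y[i])$. Summing over both parts gives the identity.

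For (ii), take $y\in B_\infty(0,\rho)$ with $y\notin K_{\hat\sigma}$. Then some $j\in\hat\sigma^c$ has $y[j]\neq0$, so $\hat\sigma^c\neq\emptyset$ automatically and (i) gives
\[
\norm{\hat x+y}_0\ge\norm{\hat x}_0+1 .
\]
For the quadratic term, expand
\[
\tpose{(\hat x+y)}D(\hat x+y)=\tpose{\hat x}D\hat x+2\tpose{y}D\hat x+\tpose{y}Dy .
\]
Drop $\tpose{y}Dy\ge0$, which holds since $D$ is positive definite. Bound the cross term by Hölder:
\[
\abs{2\tpose{y}D\hat x}\le 2\norm{y}_\infty\norm{D\hat x}_1<2\rho\norm{D\hat x}_1\le \beta\frac{\norm{D\hat x}_1}{\norm{D\hat x}_1+1}<\beta .
\]
Combining,
\[
\cF_\beta(\hat x+y)\ge\tpose{\hat x}D\hat x-2\rho\norm{D\hat x}_1+\beta\norm{\hat x}_0+\beta>\cF_\beta(\hat x).
\]
This is strict, and strictness is all the statement asks for when $\hat\sigma^c\neq\emptyset$. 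The case $\hat\sigma^c=\emptyset$ is vacuous, since then $K_{\hat\sigma}=\RR^N$ and no such $y$ exists.

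The only delicate point is getting $\rho$ right. The "$+1$" in the denominator keeps $\rho$ well defined when $D\hat x=0$ (for instance $\hat x=\mathbf 0$), and together with the strict inequality $\norm{y}_\infty<\rho$ it makes the bound on the cross term strict. Feasibility $\hat x\in\cX$ is not used in this argument.
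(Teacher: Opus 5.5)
Your proof is correct and takes the same route as the paper. The paper defers to \cite[Lemma~2]{SenAkkayaPinar2025} and writes out the same argument for Proposition~\ref{choiceofrho2}: split the $\ell_0$ term over $\hat\sigma$ and $\hat\sigma^c$, expand the quadratic, and bound the cross term $2\tpose{y}D\hat x$ by $2\norm{y}_\infty\norm{D\hat x}_1$, which the chosen radius keeps below $\beta$. One cosmetic point: your step $2\norm{y}_\infty\norm{D\hat x}_1<2\rho\norm{D\hat x}_1$ is only ``$\le$'' when $D\hat x=\mathbf 0$, but the chain stays strict because $\frac{a}{a+1}<1$ for $a\ge 0$ (and in fact $\hat x\neq\mathbf 0$, since $\bar\ccr>0$).
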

\begin{proof}
The proof follows identically from the argument in \cite[Lemma~2]{SenAkkayaPinar2025}.
\end{proof}
\begin{remark}
Proposition~\ref{choiceofrho} does not use feasibility of $\hat{x}+y$; in particular, it remains valid when $y$ is restricted to perturbations with $\hat{x}+y\in\cX$.
\end{remark}
The following two results establish a correspondence between local minimizers of~\eqref{prob:cprmvp} and global minimizers of~\eqref{prob:mvp-omega} for $\omega \subseteq \II_N$, in a manner analogous to Proposition~2 and Lemma~3 in~\cite{SenAkkayaPinar2025}. The proofs are therefore omitted.
\begin{proposition}\label{prop:localmin}
Let $\omega\subseteq\II_N$, $\omega\neq\emptyset$. For any $\beta>0$, the objective $\cF_\beta$ reaches a (local) minimum of \eqref{prob:cprmvp} at $\Xi(\omega)$, with $|\sigma(\Xi(\omega))|\geq 1$ and $\sigma(\Xi(\omega))\subseteq \omega$.
\end{proposition}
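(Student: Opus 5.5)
We will show that $\hat x:=\Xi(\omega)$ is a local minimizer of $\cF_\beta$ over $\cX$. The support properties are immediate. By Remark~\ref{rem:zpdef}, $\sigma(\Xi(\omega))\subseteq\omega$. By Proposition~\ref{prop:dual_solution}, $\hat x$ satisfies the robust constraint with equality, $\tpose{\ccr}\hat x-\gamma\norm{\hat x}_D=\bar\ccr>0$ (Assumption~\ref{assum:targetwealth}). Hence $\hat x\neq\mathbf 0$ and $|\sigma(\hat x)|\ge 1$. Feasibility of $\hat x$ holds by Remark~\ref{rem:feasall} and Lemma~\ref{lemma5}.

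Set $\hat\sigma=\sigma(\hat x)\subseteq\omega$, and let $\rho>0$ be as in Proposition~\ref{choiceofrho}. Take any $y\in B_\infty(0,\rho)$ with $\hat x+y\in\cX$. We split into two cases.

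\textbf{Case $y\notin K_{\hat\sigma}$.} Proposition~\ref{choiceofrho}(ii) gives $\cF_\beta(\hat x+y)\ge\cF_\beta(\hat x)$ directly. Its proof does not use feasibility, which is exactly what we need.

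\textbf{Case $y\in K_{\hat\sigma}$.} Since $|y[i]|<\rho\le\min_{i\in\hat\sigma}|\hat x[i]|$, Proposition~\ref{choiceofrho}(i) shows that $\hat x+y$ has support exactly $\hat\sigma$. Therefore $\norm{\hat x+y}_0=\norm{\hat x}_0$, and the penalty terms cancel. It remains to show $\tpose{(\hat x+y)}D(\hat x+y)\ge\tpose{\hat x}D\hat x$.

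This is the key step. I will show that $\hat x$ minimizes the quadratic over $\cX\cap K_{\hat\sigma}$, not merely over $\cX\cap K_\omega$. First, $\cX\cap K_{\hat\sigma}\subseteq\cX\cap K_\omega$ because $\hat\sigma\subseteq\omega$. Second, $\hat x$ itself lies in $\cX\cap K_{\hat\sigma}$. Since $\hat x$ is the minimizer over the larger set $\cX\cap K_\omega$ (Remark~\ref{rem:unique}), it is also the minimizer over the smaller set containing it. Because $\hat x+y\in\cX\cap K_{\hat\sigma}$, the desired inequality follows.

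Combining the two cases, $\cF_\beta(\hat x+y)\ge\cF_\beta(\hat x)$ for every feasible $\hat x+y$ with $y\in B_\infty(0,\rho)$. Thus $\hat x$ is a local minimizer of \eqref{prob:cprmvp}.

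The main obstacle is the possibility $\sigma(\Xi(\omega))\subsetneq\omega$, for instance when some entry of $(D_\omega)^{-1}\ccr_\omega$ is zero. The nesting argument above handles it: restricting to the actual support $\hat\sigma$, rather than to $\omega$, keeps the $\ell_0$ term constant and keeps the comparison within a set over which $\hat x$ is already optimal.
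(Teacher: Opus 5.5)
Your proof is correct and takes essentially the same route as the paper. The paper omits the proof and defers to the analogous argument in \cite{SenAkkayaPinar2025}: take $\rho$ from Proposition~\ref{choiceofrho}, use part (ii) for perturbations leaving $K_{\hat\sigma}$, and for perturbations inside $K_{\hat\sigma}\subseteq K_\omega$ use part (i) to keep the $\ell_0$ term fixed and the optimality of $\Xi(\omega)$ for \eqref{prob:mvp-omega} to compare the quadratic terms.
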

\begin{lemma}\label{lemma8}
Let $\beta>0$ and let $\hat{x}$ be a (local) minimizer of \eqref{prob:cprmvp}. Then $\hat{x}=\Xi(\sigma(\hat{x}))$.
\end{lemma}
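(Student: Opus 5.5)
The plan is to show that $\hat{x}$ is a feasible point of the support-restricted problem \eqref{prob:mvp-omega} with $\omega=\hat\sigma:=\sigma(\hat{x})$, that it is a local minimizer there, and then to upgrade local to global by convexity. Uniqueness (Remark~\ref{rem:unique}) will then force $\hat{x}=\Xi(\hat\sigma)$.

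First I will rule out $\hat{x}=\mathbf 0$. Indeed, $\mathbf 0\notin\cX$ because $\tpose{\ccr}\mathbf 0-\gamma\norm{\mathbf 0}_D=0<\bar{\ccr}$ by Assumption~\ref{assum:targetwealth}. Hence $\hat\sigma\neq\emptyset$, and $\Xi(\hat\sigma)$ is well defined by Proposition~\ref{prop:xi}. Clearly $\hat{x}\in\cX\cap K_{\hat\sigma}$.

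Second, since $\hat{x}$ is a local minimizer of \eqref{prob:cprmvp}, there is $\epsilon>0$ with $\cF_\beta(\hat{x})\le\cF_\beta(z)$ for all $z\in\cX\cap B_\infty(\hat{x},\epsilon)$. Set $\rho$ as in Proposition~\ref{choiceofrho} and shrink $\epsilon$ to $\min\{\epsilon,\rho\}$. Take any $z\in\cX\cap K_{\hat\sigma}\cap B_\infty(\hat{x},\epsilon)$ and write $y=z-\hat{x}$. Then $y[i]=0$ for $i\in\hat\sigma^c$, and $\abs{y[i]}<\min_{j\in\hat\sigma}\abs{\hat{x}[j]}$ for $i\in\hat\sigma$. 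By Proposition~\ref{choiceofrho}(i) this gives $\norm{z}_0=\norm{\hat{x}}_0$, so the penalty terms cancel and $\tpose{\hat{x}}D\hat{x}\le\tpose{z}Dz$. Thus $\hat{x}$ is a local minimizer of \eqref{prob:mvp-omega}. By Lemma~\ref{lemma5} this transfers to $\hat{x}_{\hat\sigma}$ being a local minimizer of \eqref{prob:zpmvp}.

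Third, the feasible set $\cX_{\hat\sigma}$ is convex, since $u\mapsto\tpose{\ccr_{\hat\sigma}}u-\gamma\norm{u}_{D_{\hat\sigma}}$ is concave. The objective $\tpose{u}D_{\hat\sigma}u$ is strictly convex. A local minimizer of a convex function over a convex set is global; if some $w$ had a strictly smaller value, the segment from $\hat{x}_{\hat\sigma}$ toward $w$ would contradict local optimality. Hence $\hat{x}_{\hat\sigma}=\xi(\hat\sigma)$ by the uniqueness in Proposition~\ref{prop:xi}, and zero-padding gives $\hat{x}=\Xi(\hat\sigma)$.

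The main point needing care is the second step: the penalty must be locally constant along support-preserving perturbations. This is exactly what the first term in the definition of $\rho$ guarantees, and it is why we restrict to $K_{\hat\sigma}$ rather than using part (ii) of Proposition~\ref{choiceofrho}. Everything else is routine convexity.
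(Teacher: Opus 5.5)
Your proof is correct and follows the route the paper intends: the paper omits the proof and defers to the standard argument of \cite{SenAkkayaPinar2025}, Lemma~3, namely that support-preserving perturbations smaller than $\min_{i\in\hat\sigma}\abs{\hat{x}[i]}$ leave the $\ell_0$-term unchanged, so $\hat{x}$ is a local, hence (by convexity) global, minimizer of \eqref{prob:mvp-omega}, and uniqueness then forces $\hat{x}=\Xi(\hat\sigma)$. You also supply explicitly the two adaptations the robust setting needs: excluding $\hat{x}=\mathbf{0}$ via Assumption~\ref{assum:targetwealth}, and noting that $\cX\cap K_{\hat\sigma}$ is convex because the constraint function $u\mapsto\tpose{\ccr}u-\gamma\norm{u}_D$ is concave.
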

Proposition~\ref{prop:localmin} and Lemma~\ref{lemma8} together show that the set of local minimizers of \eqref{prob:cprmvp} is precisely $\{\Xi(\omega) : \emptyset\neq\omega\subseteq\II_N\}$; in particular, every local minimizer is completely determined by its support.
\subsection{Global Minimizers of \texorpdfstring{\eqref{prob:cprmvp}}{P1}}
We begin by deriving a bounding box that contains all global minimizers of the problem. This box is subsequently employed for Big-M calibration and for tightening the feasible region.
\begin{proposition}\label{prop:globup}
   Let $\beta>0$ and suppose $\hat{x}$ is a global minimizer of~\eqref{prob:cprmvp}. Then we have
    \begin{align*}
        \norm{\hat{x}}_\infty\leq \sqrt{\frac{\eta}{s_N(D)}}<\infty,  \;  \text{ where } \eta=\min_{i\in\II_N}\left\{\frac{\bar{\ccr}^2d_i[i]}{\left(|\ccr[i]|-\gamma\sqrt{d_i[i]}\right)^2}\right\}.\end{align*}
\end{proposition}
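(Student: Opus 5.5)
Since $\hat{x}$ is a global minimizer of~\eqref{prob:cprmvp}, the strategy is to compare its objective value with that of explicit feasible competitors, namely the singleton-support minimizers $\Xi(\{i\})$, and then convert the resulting bound on $\tpose{\hat x}D\hat x$ into an $\ell_\infty$ bound through the smallest eigenvalue of $D$.

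\emph{Step 1: value of the singleton solutions.} For each $i\in\II_N$, Proposition~\ref{prop:xi} with $\omega=\{i\}$ gives $H_{\{i\}}=|\ccr[i]|/\sqrt{d_i[i]}$, which exceeds $\gamma$ by Assumption~\ref{assump:feas}, and
\begin{align*}
\xi(\{i\})=\frac{\bar{\ccr}}{H_{\{i\}}(H_{\{i\}}-\gamma)}\cdot\frac{\ccr[i]}{d_i[i]}.
\end{align*}
A direct computation then yields
\begin{align*}
d_i[i]\,\xi(\{i\})^2=\frac{\bar{\ccr}^2}{(H_{\{i\}}-\gamma)^2}=\frac{\bar{\ccr}^2 d_i[i]}{\left(|\ccr[i]|-\gamma\sqrt{d_i[i]}\right)^2}.
\end{align*}
Hence $\cF_\beta(\Xi(\{i\}))=\eta_i+\beta$, where $\eta_i$ denotes the $i$th term inside the minimum defining $\eta$. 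By Remark~\ref{rem:unique}, $\Xi(\{i\})$ lies in $\cX$.

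\emph{Step 2: comparison.} Assumption~\ref{assum:targetwealth} gives $\bar{\ccr}>0$, so $\mathbf{0}\notin\cX$ and therefore $\norm{\hat{x}}_0\ge 1$. Global optimality gives, for every $i$,
\begin{align*}
\tpose{\hat{x}}D\hat{x}+\beta\norm{\hat{x}}_0\le \eta_i+\beta.
\end{align*}
Since $\beta\norm{\hat x}_0\ge\beta$, it follows that $\tpose{\hat x}D\hat x\le\eta_i$, and minimizing over $i$ gives $\tpose{\hat x}D\hat x\le\eta$.

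\emph{Step 3: to the $\ell_\infty$ norm.} The Rayleigh quotient gives $s_N(D)\norm{\hat x}_2^2\le\tpose{\hat x}D\hat x$, because for a symmetric positive definite matrix the smallest singular value equals the smallest eigenvalue and is positive. Combining this with $\norm{\hat x}_\infty\le\norm{\hat x}_2$ yields the claimed bound. Finiteness holds because each $\eta_i$ is finite under Assumption~\ref{assump:feas}.

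The only delicate point is Step 2. There we must ensure that the penalty $\beta$ cancels, which requires the lower bound $\norm{\hat x}_0\ge 1$; this is exactly where Assumption~\ref{assum:targetwealth} is used. The singleton algebra in Step 1 is routine.
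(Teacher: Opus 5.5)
Your proof is correct and follows the paper's argument essentially step for step. Like the paper, you compare $\hat{x}$ with the singleton minimizers $\Xi(\{i\})$, use $\bar{\ccr}>0$ to get $\norm{\hat{x}}_0\geq 1$ so that the $\beta$ terms cancel, and then pass from $\tpose{\hat{x}}D\hat{x}\leq\eta$ to the $\ell_\infty$ bound via $s_N(D)$. You spell out that last step (Rayleigh quotient plus $\norm{\hat{x}}_\infty\leq\norm{\hat{x}}_2$) a bit more explicitly than the paper does.
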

\begin{proof}
Since $\hat{x}$ is a global minimizer of \eqref{prob:cprmvp}, for any $i\in\II_N$, we have
\begin{align*}
\tpose{\hat{x}}D\hat{x}+\beta\norm{\hat{x}}_0\leq \tpose{\Xi(\{i\})}D\Xi(\{i\})+\beta\|\Xi(\{i\})\|_0\leq\tpose{\Xi(\{i\})}D\Xi(\{i\})+\beta.
\end{align*}
Moreover, Assumption~\ref{assum:targetwealth} implies $\bar\ccr>0$, hence $0\notin\cX$ and therefore $\|\hat x\|_0\geq 1$, which results in $\|\hat x\|_D^2\leq \|\Xi(\{i\})\|_D^2$ for all $i\in\II_N$. By the definition of $\Xi$ on singleton supports,
\[
 \Xi(\{i\})=\frac{\bar{\ccr} \mathrm{sign}(\ccr[i])}{|\ccr[i]|-\gamma\sqrt{d_i[i]}} e_i,
\]
and therefore
\begin{align*}
\norm{\hat{x}}_D^2\leq \min_{i\in\II_N}\norm{\Xi(\{i\})}_D^2=\min_{i\in\II_N}\left\{\frac{\bar{\ccr}^2d_i[i]}{(\abs{\ccr[i]}-\gamma\sqrt{d_i[i]})^2}\right\}=\eta.
\end{align*}
For an upper bound on  the maximal entry, it is sufficient to rescale $\eta$ with the smallest singular value, that is $    \norm{\hat{x}}_\infty\leq \sqrt{\frac{\eta}{s_N(D)}}
$.
\end{proof}
The primary objective of Proposition~\ref{prop:globup} was to identify a bounding box that contains all globally optimal solutions. Nevertheless, deriving an upper bound on the variance is of independent interest. The following result establishes a strict separation of nonzero components from zero under the prescribed parameters. The resulting lower bound on the nonzero entries of global minimizers constitutes the central insight underlying the proposed warm-start heuristic.
\begin{theorem}
\label{thm:lower-bound}
    Let $\beta>0$ and suppose $\hat{x}$ is a global minimizer of~\eqref{prob:cprmvp}. Let also $\hat{\sigma}=\sigma(\hat{x})$ denote the support of $\hat{x}$, and we define for all $i\in\II_N$
    \begin{align*}
        \rho_i=\max_{\substack{s\in\{-1,1\}\\ j\in \II_N\setminus\{i\}}}\left\{\norm{e_i+s\frac{\abs{\ccr[i]}+\gamma\sqrt{d_i[i]}}{\abs{\ccr[j]}-\gamma\sqrt{d_j[j]}}e_j}_D\right\} \quad\text{ and } \quad\eta=\min_{i\in\II_N}\left\{\frac{\bar{\ccr}^2d_i[i]}{(\abs{\ccr[i]}-\gamma\sqrt{d_i[i]})^2}\right\} .
    \end{align*} Then for every $i\in\hat{\sigma}$, we have
    \begin{align*}
    \abs{\hat{x}[i]}\geq \min\left\{\frac{\sqrt{\eta+\beta}-\sqrt{\eta}}{\rho_i},\frac{\bar{\ccr}}{\abs{\ccr[i]}-\gamma\sqrt{d_i[i]}}\right\}.
\end{align*}
\end{theorem}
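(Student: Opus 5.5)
We split into two cases according to the size of the support $\hat\sigma$. If $\abs{\hat\sigma}=1$, say $\hat\sigma=\{i\}$, then $\hat x$ is a global, hence local, minimizer. By Lemma~\ref{lemma8}, $\hat x=\Xi(\{i\})$. The explicit singleton formula from the proof of Proposition~\ref{prop:globup} then gives $\abs{\hat x[i]}=\bar\ccr/(\abs{\ccr[i]}-\gamma\sqrt{d_i[i]})$, so the bound holds through the second term of the minimum.

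If $\abs{\hat\sigma}\geq 2$, we argue by contradiction. Fix $i\in\hat\sigma$, set $a:=\abs{\hat x[i]}$, and suppose $a<(\sqrt{\eta+\beta}-\sqrt\eta)/\rho_i$. The plan is to build a feasible competitor with strictly smaller support and strictly smaller objective. Pick any $j\in\hat\sigma\setminus\{i\}$ and define
\[
y:=\hat x-\hat x[i]e_i+\operatorname{sign}(\ccr[j])\,\delta\, e_j,\qquad \delta:=a\,\frac{\abs{\ccr[i]}+\gamma\sqrt{d_i[i]}}{\abs{\ccr[j]}-\gamma\sqrt{d_j[j]}}>0 .
\]
The denominator is positive by Assumption~\ref{assump:feas}.

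We first check feasibility. Write $g(x)=\tpose{\ccr}x-\gamma\norm{x}_D$. Removing the $i$-th entry changes $\tpose{\ccr}x$ by at most $a\abs{\ccr[i]}$. By the triangle inequality it raises $\gamma\norm{x}_D$ by at most $\gamma a\sqrt{d_i[i]}$. The added $e_j$ term raises $\tpose{\ccr}x$ by exactly $\delta\abs{\ccr[j]}$, and again by the triangle inequality it raises $\gamma\norm{x}_D$ by at most $\gamma\delta\sqrt{d_j[j]}$. Hence
\[
g(y)\geq g(\hat x)-a(\abs{\ccr[i]}+\gamma\sqrt{d_i[i]})+\delta(\abs{\ccr[j]}-\gamma\sqrt{d_j[j]})=g(\hat x)\geq\bar\ccr,
\]
so $y\in\cX$.

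Next we count the support. We have $\sigma(y)\subseteq\hat\sigma\setminus\{i\}$, so $\norm{y}_0\leq\norm{\hat x}_0-1$. This is where choosing $j$ inside the support matters. Moreover $y\neq 0$, since $0\notin\cX$.

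Finally we bound the risk. The perturbation is $y-\hat x=-\hat x[i]\bigl(e_i+s\,\tfrac{\abs{\ccr[i]}+\gamma\sqrt{d_i[i]}}{\abs{\ccr[j]}-\gamma\sqrt{d_j[j]}}e_j\bigr)$ for some sign $s\in\{-1,1\}$ determined by $\operatorname{sign}(\ccr[j])$ and $\operatorname{sign}(\hat x[i])$. Its $D$-norm is therefore at most $a\rho_i$; the maximum over $s$ and $j$ in the definition of $\rho_i$ is exactly what absorbs these unknown sign and index choices. Consequently $\norm{y}_D\leq\norm{\hat x}_D+a\rho_i$, and
\[
\cF_\beta(y)-\cF_\beta(\hat x)\leq(\norm{\hat x}_D+a\rho_i)^2-\norm{\hat x}_D^2-\beta .
\]
From the proof of Proposition~\ref{prop:globup} we know $\norm{\hat x}_D\leq\sqrt\eta$. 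The map $t\mapsto(t+a\rho_i)^2-t^2$ is increasing for $t\geq 0$, so the right-hand side is at most $(\sqrt\eta+a\rho_i)^2-\eta-\beta$. This is negative precisely when $a<(\sqrt{\eta+\beta}-\sqrt\eta)/\rho_i$, contradicting global optimality of $\hat x$. Thus in this case $a\geq(\sqrt{\eta+\beta}-\sqrt\eta)/\rho_i$, which together with the first case yields the stated minimum.

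The main obstacle is the feasibility-restoring step. One must make sure that the triangle-inequality estimates for $\gamma\norm{\cdot}_D$ go in the right direction, and that the compensating move along $e_j$ has positive net robust return rate $\abs{\ccr[j]}-\gamma\sqrt{d_j[j]}$, which Assumption~\ref{assump:feas} guarantees. Once this is in place, the rest is the monotonicity estimate above.
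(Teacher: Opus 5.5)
Your proof is correct and follows the paper's argument. For $|\hat\sigma|=1$ you use the same singleton closed form; for $|\hat\sigma|\ge 2$ you use the same competitor $\hat x-\hat x[i]e_i+\operatorname{sign}(\ccr[j])\,|\hat x[i]|\,\delta_{ij}e_j$ with $j\in\hat\sigma\setminus\{i\}$, the same triangle-inequality feasibility check, and arrive at the same estimate $\beta\le(2\sqrt\eta+|\hat x[i]|\rho_i)|\hat x[i]|\rho_i$. The differences are cosmetic: you argue by contradiction, bound $\|y\|_D$ directly by the triangle inequality instead of applying Cauchy--Schwarz to the difference of squares, and use only feasibility $g(\hat x)\ge\bar\ccr$ instead of activity of the constraint, which is all the feasibility step actually needs.
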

\begin{proof}
    Let $i\in\hat{\sigma}$. We consider two cases based on the cardinality of $\hat{\sigma}$.

    \textbf{Case 1:} Assume $\abs{\hat{\sigma}}\geq 2$, then there exists $j\in\hat{\sigma}$ such that $i\neq j$. We define $g_{ij}:\RR^N\rightarrow \RR^N$ as $g_{ij}(x)=x-x[i]e_i-x[j]e_j$ where $e_i$ and $e_j$ are canonical basis vectors of the specified index. We introduce the function $f(t_i,t_j)=\cF_\beta(g_{ij}(\hat{x})+t_ie_i+t_je_j)$. Finally, we define a feasibility function
    \begin{align*}
        h(t_i,t_j)=\tpose{\ccr}g_{ij}(\hat{x})+t_i\ccr[i]+t_j\ccr[j]-\gamma\norm{g_{ij}(\hat{x})+t_ie_i+t_je_j}_D-\bar{\ccr}.
    \end{align*}
   Since $\hat{x}$ is a global minimizer, Lemma~\ref{lemma8} gives $\hat{x}=\Xi(\hat{\sigma})$.  By Remark~\ref{rem:unique}, $\hat{x}$ is therefore the unique solution of \eqref{prob:mvp-omega} with $\omega=\hat{\sigma}$. Proposition~\ref{prop:dual_solution} then implies that the restricted constraint is active at $\hat{x}$. This, in turn,  implies that the main constraint is also active, yielding $h(\hat{x}[i],\hat{x}[j])= 0$. Precisely, we rewrite this equality as
    \begin{align*}
        h(\hat{x}[i],\hat{x}[j])=\tpose{\ccr}\hat{x}-\gamma\norm{\hat{x}}_D-\bar{\ccr}=0\Rightarrow \bar{\ccr}=\tpose{\ccr}\hat{x}-\gamma\norm{\hat{x}}_D.
    \end{align*}
    Now, we define a map by fixing a choice for $t_j$
    \begin{align*}
        h(t_i,t_j)&=\tpose{\ccr}g_{ij}(\hat{x})+t_i\ccr[i]+t_j\ccr[j]-\gamma\norm{g_{ij}(\hat{x})+t_ie_i+t_je_j}_D-\bar{\ccr}\\
        &=\gamma\left(\norm{\hat{x}}_D-\norm{g_{ij}(\hat{x})+t_ie_i+t_je_j}_D\right)+(t_i-\hat{x}[i])\ccr[i]+(t_j-\hat{x}[j])\ccr[j]\\&
        \geq-\gamma\left(\norm{(t_i-\hat{x}[i])e_i+(t_j-\hat{x}[j])e_j}_D\right)+(t_i-\hat{x}[i])\ccr[i]+(t_j-\hat{x}[j])\ccr[j]\\&
        \geq -\gamma\abs{t_i-\hat{x}[i]}\sqrt{d_i[i]}-\gamma\abs{t_j-\hat{x}[j]}\sqrt{d_j[j]}+(t_i-\hat{x}[i])\ccr[i]+(t_j-\hat{x}[j])\ccr[j] ,
    \end{align*}
    where the first inequality exploits  $\|u\|-\|v\|\geq -\|u-v\|$ with $u=\hat x$ and $v=g_{ij}(\hat x)+ t_i e_i+t_je_j$, and the second follows from the triangle inequality together with $\|e_i\|_D=\sqrt{d_i[i]}$. Introduce the shorthand  \[
     \delta_{ij}:=\frac{\abs{\ccr[i]}+\gamma\sqrt{d_i[i]}}{\abs{\ccr[j]}-\gamma\sqrt{d_j[j]}} >0,
    \]
    where positivity follows from Assumption~\ref{assump:feas}.
     Set $t_j=\mathrm{sign}(\ccr[j])\abs{t_i-\hat{x}[i]}\delta_{ij}+\hat{x}[j]$. Substituting this choice into the lower bound above yields
    \begin{align*}
        h(t_i,t_j)&\geq -\gamma\abs{t_i-\hat{x}[i]}\sqrt{d_i[i]}+(t_i-\hat{x}[i])\ccr[i]+\delta_{ij}\abs{t_i-\hat{x}[i]}(\abs{\ccr[j]}-\gamma\sqrt{d_j[j]})\\
        &\geq -(\abs{\ccr[i]}+\gamma\sqrt{d_i[i]})\abs{t_i-\hat{x}[i]}+\delta_{ij}\abs{t_i-\hat{x}[i]}(\abs{\ccr[j]}-\gamma\sqrt{d_j[j]})=0.
    \end{align*}
    Thus this choice ensures $h(t_i,t_j)\geq 0$. We may define a 1-dimensional restricted objective as
    \begin{align*}
        f(t)&=\norm{g_{ij}(\hat{x})+te_i+(\mathrm{sign}(\ccr[j])\abs{t-\hat{x}[i]}\delta_{ij}+\hat{x}[j])e_j}_D^2\\
        &\qquad +\beta \norm{g_{ij}(\hat{x})+te_i+(\mathrm{sign}(\ccr[j])\abs{t-\hat{x}[i]}\delta_{ij}+\hat{x}[j])e_j}_0\\
        &=\norm{\hat{x}+(t-\hat{x}[i])e_i+\mathrm{sign}(\ccr[j])\abs{t-\hat{x}[i]}\delta_{ij}e_j}_D^2\\
        &\qquad+\beta \norm{\hat{x}+(t-\hat{x}[i])e_i+\mathrm{sign}(\ccr[j])\abs{t-\hat{x}[i]}\delta_{ij}e_j}_0.
    \end{align*}
    By the choice of $t_j$ above, the point $\hat{x}-\hat{x}[i]e_i+\mathrm{sign}(\ccr[j])\abs{\hat{x}[i]}\delta_{ij}e_j$ corresponding to $t=0$ is feasible ($h\geq 0$), while $f(\hat{x}[i])=\cF_\beta(\hat{x})$. Global optimality of $\hat{x}$ therefore implies $f(0)\geq f(\hat{x}[i])$, hence we have
    \begin{align*}
        \norm{\hat{x}}_D^2+\beta\norm{\hat{x}}_0
        &\leq \norm{\hat{x}-\hat{x}[i]e_i+\mathrm{sign}(\ccr[j])\abs{\hat{x}[i]}\delta_{ij}e_j}_D^2\\
&\quad+\beta\norm{\hat{x}-\hat{x}[i]e_i+ \mathrm{sign}(\ccr[j])\abs{\hat{x}[i]}\delta_{ij}e_j}_0.
    \end{align*}
    The vector $\hat{x}-\hat{x}[i]e_i+\mathrm{sign}(\ccr[j])\abs{\hat{x}[i]}\delta_{ij}e_j$ has at most $\abs{\hat{\sigma}}-1$ nonzero elements since the perturbation removes index $i$ without adding a new nonzero index.  Hence its $\ell_0$-term is at most $\|\hat{x}\|_0-1$, yielding
\begin{align}\label{eq:betabound}
   \notag \beta &\leq \norm{\hat{x}-\hat{x}[i]e_i+\mathrm{sign}(\ccr[j])\abs{\hat{x}[i]}\delta_{ij}e_j}_D^2-\norm{\hat{x}}_D^2\\&\notag=\tpose{(2\hat{x}-\hat{x}[i]e_i+\mathrm{sign}(\ccr[j])\abs{\hat{x}[i]}\delta_{ij}e_j)}D(-\hat{x}[i]e_i+\mathrm{sign}(\ccr[j])\abs{\hat{x}[i]}\delta_{ij}e_j)\\&
    \leq \norm{2\hat{x}-\hat{x}[i]e_i+\mathrm{sign}(\ccr[j])\abs{\hat{x}[i]}\delta_{ij}e_j}_D\norm{-\hat{x}[i]e_i+\mathrm{sign}(\ccr[j])\abs{\hat{x}[i]}\delta_{ij}e_j}_D\\
    &\notag \leq\left(2\norm{\hat{x}}_D+\abs{\hat{x}[i]}\norm{e_i-\mathrm{sign}(\hat{x}[i]\ccr[j])\delta_{ij}e_j}_D\right)\abs{\hat{x}[i]}\norm{e_i-\mathrm{sign}(\hat{x}[i]\ccr[j])\delta_{ij}e_j}_D.
\end{align}
Here, the last two inequalities follow from the Cauchy-Schwarz and the triangle inequalities. By the definition of $\rho_i$, we have $ \|e_i-\mathrm{sign}(\hat{x}[i]\ccr[j])\delta_{ij}e_j\|_D\leq \rho_i$. Substituting this together with $\|\hat x\|_D\leq \sqrt{\eta}$, established in the proof of Proposition~\ref{prop:globup}, into~\eqref{eq:betabound}, we obtain
 \begin{align}
 \label{eq:betaprop}
     \beta\leq (2\sqrt{\eta}+\abs{\hat{x}[i]}\rho_i)\abs{\hat{x}[i]}\rho_i.
 \end{align}
 Consider the polynomial $P(v)= \rho_i^2v^2+2\sqrt{\eta}\rho_i v-\beta$. Since $P$ is a quadratic with a positive leading coefficient, \eqref{eq:betaprop} implies that $|\hat x[i]|$ must be larger than the positive root of $P$. Evaluating these roots gives
 \begin{align*}
     v_+,v_-=\frac{-2\sqrt{\eta}\rho_i\pm\sqrt{4\eta\rho_i^2+4\rho_i^2\beta}}{2\rho_i^2}=\frac{-\sqrt{\eta}\pm\sqrt{\eta+\beta}}{\rho_i}.
 \end{align*}
 Hence, we obtain the lower bound $\abs{\hat{x}[i]}\geq \frac{\sqrt{\eta+\beta}-\sqrt{\eta}}{\rho_i}$.

 \textbf{Case 2: } Assume $\abs{\hat{\sigma}}=1$, then we have a closed form solution for the specific entry $i\in\hat{\sigma}$, which takes the form
\begin{align*}
    \hat{x}[i]=\frac{\bar{\ccr}\;\mathrm{sign}(\ccr[i])}{\abs{\ccr[i]}-\gamma\sqrt{d_i[i]}}\quad\Rightarrow\quad \abs{\hat{x}[i]}=\frac{\bar{\ccr}}{\abs{\ccr[i]}-\gamma\sqrt{d_i[i]}}.
\end{align*}
In both cases the lower bound takes the form
\[
    \min\left\{ \frac{\sqrt{\eta+\beta}-\sqrt{\eta}}{\rho_i},\frac{\bar{\ccr}}{\abs{\ccr[i]}-\gamma\sqrt{d_i[i]}}\right\},
\] where the first term is active when $\abs{\hat{\sigma}}\geq2$ and the second when $\abs{\hat{\sigma}}=1$. Since a global minimizer must fall into one of these two cases, the bound holds unconditionally for every $i\in\hat{\sigma}$.
 \end{proof}
The first term of the bound grows like $\sqrt{\beta}$ for large $\beta$, so the larger the sparsity penalty, the further the nonzero entries of a global minimizer with at least two assets must lie from zero. Having established bounds on the components of global minimizers, we next address their existence, for which we first verify that the objective is coercive.
\begin{lemma}\label{lem:coer}
    The feasible set $\cX$ defined in~\eqref{eq:cX}  is nonempty and closed, and its objective $\cF_\beta(x)$ is coercive on  $\cX$.
\end{lemma}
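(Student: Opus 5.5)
The plan is to prove the three assertions separately.

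\emph{Nonemptiness.} For any $i\in\II_N$, the singleton point $\Xi(\{i\})$ lies in $\cX$. Indeed, by Remark~\ref{rem:unique} it is the solution of the subproblem with $\omega=\{i\}$, and that subproblem is feasible by Assumption~\ref{assump:feas} and Remark~\ref{rem:feasall}. One can also check this directly: $x=t\,\mathrm{sign}(\ccr[i])e_i$ gives $\tpose{\ccr}x-\gamma\norm{x}_D=t(\abs{\ccr[i]}-\gamma\sqrt{d_i[i]})$, which equals $\bar{\ccr}$ for $t=\bar{\ccr}/(\abs{\ccr[i]}-\gamma\sqrt{d_i[i]})>0$.

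\emph{Closedness.} The map $x\mapsto \tpose{\ccr}x-\gamma\norm{x}_D$ is continuous, since it is a linear function minus a multiple of a norm. Hence $\cX$ is the preimage of the closed set $[\bar{\ccr},\infty)$ under a continuous map, and so it is closed. In fact this map is concave, so $\cX$ is also convex, although we do not need this.

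\emph{Coercivity.} We use that $\beta\norm{x}_0\geq 0$, so $\cF_\beta(x)\geq \tpose{x}Dx\geq s_N(D)\norm{x}_2^2$, where $s_N(D)>0$ because $D$ is positive definite. It follows that $\cF_\beta(x)\to\infty$ whenever $\norm{x}_2\to\infty$ with $x\in\cX$. Equivalently, every sublevel set $\{x\in\cX:\cF_\beta(x)\leq c\}$ lies in the Euclidean ball of radius $\sqrt{c/s_N(D)}$, so it is bounded. Closedness of these sublevel sets is not required for coercivity as such. If it is needed for the existence result that follows, the plan is to note that $\cF_\beta$ is lower semicontinuous: $\phi$ is lower semicontinuous, since $\{t:\phi(t)>a\}$ is open for every $a$, and $\tpose{x}Dx$ is continuous. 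Sublevel sets of a lower semicontinuous function on a closed set are closed.

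I expect no real obstacle here. The only subtle point is the discontinuity of the $\ell_0$-term, which the argument sidesteps by bounding it below by zero for coercivity and, if needed, by invoking lower semicontinuity of $\phi$.
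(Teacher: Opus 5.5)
Your proof is correct and follows the same route as the paper: coercivity from $\cF_\beta(x)\geq s_N(D)\norm{x}_2^2$, closedness of $\cX$ as the preimage of $[\bar{\ccr},\infty)$ under a continuous map, and nonemptiness from Assumption~\ref{assump:feas}. Your explicit feasible point $t\,\mathrm{sign}(\ccr[i])e_i$ makes the paper's one-line appeal to Assumption~\ref{assump:feas} concrete, and your choice of $t>0$ relies on $\bar{\ccr}>0$ from Assumption~\ref{assum:targetwealth}.
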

\begin{proof}
    Since $D \succ 0$, we have
        $x^\top D x \ge s_N(D)\|x\|_2^2$. Hence, $\cF_\beta(x)
        =
        x^\top D x + \beta \|x\|_0
        \ge
        s_N(D)\|x\|_2^2$.
    Therefore, $\cF_\beta$ is coercive. Moreover, the feasible set $\cX
        =
        \{
        x \in \mathbb{R}^N :
        \ccr^\top x - \gamma \|x\|_D \ge \bar \ccr
        \}$ is closed, since it is the preimage of a closed interval under a continuous function. Assumption~\ref{assump:feas} guarantees that $\cX$ is nonempty.

       \end{proof}
We are now ready to establish the existence of a global minimizer.
\begin{theorem}
    Let $\beta>0$. Then the set of global minimizers of $\cF_\beta$,
    \begin{align*}
        \hat{X}
        =
        \left\{
        \hat{x}\in\cX :
        \cF_\beta(\hat{x})
        =
        \min_{x\in\cX}\cF_\beta(x)
        \right\}
    \end{align*}
    is nonempty.
\end{theorem}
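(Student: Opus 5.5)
The plan is to avoid the usual Weierstrass argument, which fails here because $\cF_\beta$ is not continuous: the $\ell_0$ term is only lower semicontinuous. Instead I will exploit the finite support structure established earlier. By Lemma~\ref{lem:coer}, $\cX$ is nonempty, so $\inf_{x\in\cX}\cF_\beta(x)$ is finite and bounded below by $0$. The approach is to show that this infimum equals the minimum of $\cF_\beta$ over the finite candidate set $\{\Xi(\omega):\emptyset\neq\omega\subseteq\II_N\}$, which is then automatically attained.

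The first step is to note that $\mathbf{0}\notin\cX$ by Assumption~\ref{assum:targetwealth}, so every feasible $x$ has a nonempty support $\omega=\sigma(x)$. Then $x\in\cX\cap K_\omega$, and by Remark~\ref{rem:unique}, $\Xi(\omega)$ minimizes the quadratic over $\cX\cap K_\omega$. This gives $\tpose{\Xi(\omega)}D\Xi(\omega)\le\tpose{x}Dx$. In addition, $\sigma(\Xi(\omega))\subseteq\omega$ gives $\norm{\Xi(\omega)}_0\le\norm{x}_0$. Together these yield $\cF_\beta(\Xi(\omega))\le\cF_\beta(x)$.

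The second step is the conclusion. Since there are at most $2^N-1$ nonempty supports, the minimum of $\cF_\beta(\Xi(\omega))$ over them is attained at some $\omega^*$. The point $\Xi(\omega^*)\in\cX$ then satisfies $\cF_\beta(\Xi(\omega^*))\le\cF_\beta(x)$ for every $x\in\cX$, so $\hat X\ni\Xi(\omega^*)$ and $\hat X$ is nonempty.

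As an alternative route consistent with Lemma~\ref{lem:coer}, one could use coercivity to restrict attention to a compact sublevel set $\cX\cap\{\cF_\beta\le\cF_\beta(x_0)\}$ and then apply lower semicontinuity of $\cF_\beta$ (the quadratic is continuous and $\phi$ is lsc). A minimizing sequence then has a convergent subsequence whose limit lies in $\cX$ by closedness and attains the infimum. The only subtle point in either route is the discontinuity of $\norm{\cdot}_0$. The support-enumeration argument handles it most cleanly, so I will present that one and mention lsc/coercivity only as a remark. I expect no real obstacle, since the key inequality follows directly from the subproblem optimality of $\Xi(\omega)$.
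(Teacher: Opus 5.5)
Your argument is correct, but it takes a different route from the paper. The paper's proof is essentially the one you relegate to a remark. It invokes Lemma~\ref{lem:coer} ($\cX$ closed and nonempty, $\cF_\beta$ coercive), notes that $\cF_\beta$ is lower semicontinuous because $\norm{\cdot}_0$ is, and concludes with the lower-semicontinuous version of Weierstrass (Rockafellar--Wets, Theorem~1.9). So the Weierstrass argument does not actually fail here; only its continuous version does.

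Your main route instead reduces the problem to a finite one. For feasible $x$, the support $\omega=\sigma(x)$ is nonempty because $\bar{\ccr}>0$ forces $\mathbf{0}\notin\cX$. Subproblem optimality of $\Xi(\omega)$ (Remark~\ref{rem:unique}) together with $\sigma(\Xi(\omega))\subseteq\omega$ gives $\cF_\beta(\Xi(\omega))\le\cF_\beta(x)$, and the minimum over at most $2^N-1$ candidates is attained.

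What your route buys:
\begin{itemize}
\item It is more elementary: it needs no semicontinuity of $\norm{\cdot}_0$ and no topology beyond existence of the convex subproblem minimizers.
\item It is constructive: it shows directly that some global minimizer is one of the points $\Xi(\omega)$. That is exactly the fact underlying the enumeration-based branch-and-bound and the candidate count in Remark~\ref{rem:noofsols}.
\end{itemize}

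What it costs is reliance on the support-restricted machinery. You need the minimizers $\Xi(\omega)$ to exist; for supports of feasible points this is automatic, since those subproblems are feasible, strictly convex and coercive. You also need $\mathbf{0}\notin\cX$, or else $\mathbf{0}$ must be added to the candidate list. The paper's argument uses only closedness, coercivity and lower semicontinuity. It therefore transfers unchanged to settings with no closed-form or well-structured support-restricted solutions.
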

\begin{proof}
        By Lemma~\ref{lem:coer}, the feasible set $\cX$ is closed and nonempty, while $\cF_\beta$ is coercive. In addition, $\cF_\beta$ is lower semicontinuous, since $\norm{\cdot}_0$ is lower semicontinuous~\cite[Proof of Proposition 4.3]{Nikolova2013}. Since $\cF_\beta$ is lower semicontinuous and coercive on the closed nonempty set $\cX$, it follows from \cite[Theorem~1.9]{rockafellar1998} that $\cF_\beta$ attains its minimum over $\cX$. Therefore, $\hat{X}$ is nonempty.
\end{proof}
The following statement establishes the existence of a threshold value for the penalty parameter corresponding to each prescribed sparsity level. In particular, for every global minimizer of $\cF_\beta$ associated with a given sparsity, one can identify a penalty parameter that enforces that level.
\begin{proposition} \label{prop:betak}
	For any $1\leq k\leq N-1$, there exists $\beta_k>0$ such that if $\beta > \beta_k$, then every global minimizer $\hat{x}$ of $\cF_\beta$ satisfies $\norm{\hat{x}}_0 \leq k$.
\end{proposition}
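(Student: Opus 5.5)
The argument is a direct comparison with a singleton portfolio, in the spirit of the proof of Proposition~\ref{prop:globup}. We will show that once $\beta$ is large, any candidate with more than $k$ nonzeros costs more than the best singleton portfolio. Every global minimizer can then be neither empty nor have support larger than $k$.

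\textbf{Step 1: upper bound on the optimal value.} Let $\hat{x}$ be a global minimizer of $\cF_\beta$ over $\cX$. By Remark~\ref{rem:feasall} and Remark~\ref{rem:unique}, each singleton point $\Xi(\{i\})$ is feasible. Comparing $\hat{x}$ with these points, exactly as in the proof of Proposition~\ref{prop:globup}, gives
\begin{align*}
\tpose{\hat{x}}D\hat{x}+\beta\norm{\hat{x}}_0\leq \min_{i\in\II_N}\norm{\Xi(\{i\})}_D^2+\beta=\eta+\beta .
\end{align*}

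\textbf{Step 2: lower bound for large supports.} Suppose $\norm{\hat{x}}_0\geq k+1$. Since $\tpose{\hat{x}}D\hat{x}\geq 0$, we get $\cF_\beta(\hat{x})\geq \beta(k+1)$. Combining this with Step~1 yields $\beta(k+1)\leq \eta+\beta$, that is, $\beta k\leq \eta$. A sharper version is also available: Lemma~\ref{lemma8} gives $\hat{x}=\Xi(\hat{\sigma})$, so $\tpose{\hat{x}}D\hat{x}$ could be bounded below by $\min_{\omega\neq\emptyset}\norm{\Xi(\omega)}_D^2>0$. This is not needed.

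\textbf{Step 3: choice of the threshold.} Set
\begin{align*}
\beta_k:=\frac{\eta}{k}>0 ,
\end{align*}
which is positive because Assumption~\ref{assum:targetwealth} gives $\bar{\ccr}>0$ and hence $\eta>0$. If $\beta>\beta_k$, the inequality $\beta k\leq\eta$ from Step~2 fails. Therefore $\norm{\hat{x}}_0\leq k$ for every global minimizer $\hat{x}$. Nonemptiness of the set of global minimizers is already guaranteed by the preceding existence theorem, although the claim concerns every minimizer and so does not require it.

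\textbf{Main obstacle.} The only delicate point is Step~1. It must use a feasible comparison point whose $\ell_0$-norm is exactly $1$, and it relies on Assumption~\ref{assump:feas} to guarantee that the singleton subproblems are feasible and have the closed form computed in Proposition~\ref{prop:globup}. Everything else is arithmetic. If a less conservative threshold is desired, one could refine $\beta_k$ by comparing with the best $k$-support value $\min_{|\omega|\le k}\norm{\Xi(\omega)}_D^2+\beta k$. In that case one would use the positive lower bound on the variance of larger supports, which requires Lemma~\ref{lemma8}.
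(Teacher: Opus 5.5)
Your proof is correct and is essentially the paper's argument: any feasible point with at least $k+1$ nonzeros has objective at least $\beta(k+1)$, and for large $\beta$ this exceeds the value of a feasible singleton portfolio $\Xi(\{i\})$. The only difference is in the constant. You use $\norm{\Xi(\{i\})}_0=1$ exactly, whereas the paper bounds $\norm{\Xi(\omega)}_0\le k$ and requires $\beta_k\ge \tpose{\Xi(\omega)}D\Xi(\omega)$. Your threshold $\beta_k=\eta/k$ is therefore smaller by a factor of $k$, and your contradiction argument on a fixed minimizer also removes the need for the paper's separate case in which no feasible point has more than $k$ nonzeros.
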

\begin{proof}
	Fix $k\in \mathbb I_{N-1}$ and consider the set $X_{k+1} =  \{ x\in \cX: \norm{x}_0 \geq k+1 \}$.	Suppose first that $X_{k+1}$ is nonempty. Then, for each $\overline{x} \in X_{k+1}$, we have $\cF_\beta(\overline{x})
	= \tpose{\overline{x}}D\overline{x}+\beta \norm{\overline{x}}_0
	\geq \beta(k+1)$. Choose any support $\omega \subset \mathbb{I}_N$ such that $|\omega| \leq k$ and $\Xi(\omega) \in \mathcal{X}$. Such a support always exists: by Assumption~\ref{assump:feas} and Remark~\ref{rem:feasall}, every singleton $\{i\}$ with $i \in \II_N$ satisfies
$H_{\{i\}} = \frac{|\mathfrak{r}[i]|}{\sqrt{d_i[i]}} > \gamma$,
which guarantees $\Xi(\{i\}) \in \mathcal{X}$. Since $|\{i\}| = 1 \leq k$ for any $k \leq N-1$, taking $\omega = \{i\}$ for any $i \in \mathbb I_N$ yields a valid choice. Choose $\beta_k$ so that
	$\beta_k \geq \tpose{\Xi(\omega)}D\Xi(\omega)$. For such a choice,
	\begin{align*}
		\cF_\beta (\Xi(\omega))
		&= \tpose{\Xi(\omega)}D\Xi(\omega) + \beta \norm{\Xi(\omega)}_0
		\leq \beta_k + \beta k
		< \beta(k+1)
		\leq \cF_\beta(\overline{x}),
		\qquad \forall \overline{x}\in X_{k+1}
	\end{align*}
	whenever $\beta>\beta_k$. Let $\hat{x}\in \hat{X}$ denote a global minimizer of $\cF_\beta$ with
	$\hat{x}\in \cX$. Since
	\begin{align*}
	\cF_\beta(\hat{x})
	\leq \cF_\beta(\Xi(\omega))
	< \cF_\beta(\overline{x}),
	\qquad \forall \overline{x}\in X_{k+1},
	\end{align*}
	we must have $\hat{x}\notin X_{k+1}$. By the definition of $X_{k+1}$, this implies $\norm{\hat{x}}_0 \leq k$. If $X_{k+1} = \emptyset$, then the existence of a global minimizer immediately yields $\norm{\hat{x}}_0\leq k$.
\end{proof}
\begin{remark}\label{rem:noofsols}
    For any $\beta>0$, every admissible support induces a local minimizer of $\cF_\beta$. Nevertheless, this correspondence is not one-to-one, as a single local minimizer may be generated by multiple supports via zero-padding operators. Consequently, the number of distinct local minimizers is bounded above by $2^N - 1$, which corresponds to the total number of nontrivial supports.
\end{remark}
In the next section, we extend the theoretical results developed here to the problem of robust return maximization under a variance budget. In particular, we adapt the main structural and sparsity-related properties to this risk-constrained setting and show that similar conclusions can be obtained in that framework.
\section{Analysis of \texorpdfstring{\eqref{prob:cprmvp2}}{P2}}\label{theory2}
In this section, we characterize the support-restricted, local, and global minimizers of~\eqref{prob:cprmvp2} and derive bounds on the nonzero components of its globally optimal solutions. For ease of notation, define its feasible set as
\begin{align}
\cY:=\left\{x\in\RR^N:\norm{x}_D \leq T\right\},\label{eq:cY}
\end{align}
We next impose a lower bound on $T$ that rules out the zero vector as a global
minimizer.
\begin{assumption}\label{assum:feas2}
    Under Assumption~\ref{assump:feas}, we further assume
    \begin{align*}
        T>\min_{i\in\II_N}\left\{\frac{\sqrt{d_i[i]}\beta}{\abs{\ccr[i]}-\sqrt{d_i[i]}\gamma}\right\}.
    \end{align*}
\end{assumption}
\begin{remark}
    The purpose of this lower bound is to rule out the trivial solution $x=\mathbf{0}$, in which all wealth is invested in the risk-free asset. To see this, fix $i\in\II_N$ and consider the singleton portfolio
    \[
    x=\frac{T\,\mathrm{sign}(\ccr[i])}{\sqrt{d_i[i]}}e_i.
    \]
    This portfolio satisfies $\norm{x}_D=T$, and its objective value is
    \[
    \cG_\beta(x)
    =
    -\frac{T}{\sqrt{d_i[i]}}
    \left(\abs{\ccr[i]}-\gamma\sqrt{d_i[i]}\right)+\beta <0,
    \]
    where the strict inequality follows from Assumption~\ref{assum:feas2}. Thus, $\cG_\beta$ takes a negative value at a feasible point in $\cY$, whereas $\cG_\beta(\mathbf{0})=0$. Hence, the zero vector cannot be globally optimal.
\end{remark}

\begin{remark}The reasoning outlined in Remark~\ref{rem:whyworks} remains applicable to this objective function, as it preserves separability with respect to supports. This structural property plays a central role in developing a rigorous characterization of local minimizers.
\end{remark}
We consider the following restricted problem in order to characterize the local minimizers of~\eqref{prob:cprmvp2}:
\begin{align}\tag{$\cP^2_\omega$}\label{prob:mvp2-omega}
\min_{x\in \cY\cap K_\omega}\gamma\norm{x}_D-\tpose{\ccr}x.
\end{align}

Under Assumption~\ref{assump:feas}, we have $H_\omega>\gamma$ for every nonempty $\omega$. In this case, the subproblem admits a unique optimal solution, which we characterize in the next section.
\subsection{Minimizers of \texorpdfstring{\eqref{prob:mvp2-omega}}{P2w}}
For a fixed nonempty index set $\omega\subseteq\II_N$, we introduce the corresponding restricted feasible region
$\cY_\omega=\{u\in\RR^{\abs{\omega}}:\norm{u}_{D_\omega}\leq T\}$. Based on this construction and the zero padding operator, we formulate a problem equivalent to \eqref{prob:mvp2-omega}, now expressed over a convex feasible set:
\begin{align}\tag{$\cZ\cP^2_\omega$}\label{prob:zp-rmvp2}
\min_{u\in \cY_\omega}\gamma\norm{u}_{D_\omega}-\tpose{\ccr_{\omega}}u, \quad \omega\neq \emptyset.
\end{align}
In this case, the subproblems~\eqref{prob:mvp2-omega} and~\eqref{prob:zp-rmvp2} admit a unique optimal solution, characterized below.
\begin{proposition}\label{prop:xi2}
For nonempty $\omega \subseteq \II_N$, the unique optimal solution of \eqref{prob:zp-rmvp2} is
\begin{align*}
\pi(\omega):=\frac{T}{H_\omega}(D_\omega)^{-1}\ccr_\omega.
\end{align*}
\end{proposition}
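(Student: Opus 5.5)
The plan is to reduce \eqref{prob:zp-rmvp2} to a one-dimensional problem in the \emph{risk level} $t=\norm{u}_{D_\omega}$, using polar coordinates with respect to the $D_\omega$-norm. Each $u\in\cY_\omega\setminus\{\mathbf{0}\}$ can be written as $u=tv$ with $t\in(0,T]$ and $\norm{v}_{D_\omega}=1$. The objective then becomes $t\bigl(\gamma-\tpose{\ccr_\omega}v\bigr)$, and $u=\mathbf{0}$ gives the value $0$.

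\textbf{Step 1 (inner maximization).} For fixed $t$, minimizing over $v$ amounts to maximizing $\tpose{\ccr_\omega}v$ over the $D_\omega$-unit sphere. By Cauchy--Schwarz in the $D_\omega$-inner product,
\[
\tpose{\ccr_\omega}v=\tpose{\bigl(D_\omega^{-1}\ccr_\omega\bigr)}D_\omega v\le \norm{D_\omega^{-1}\ccr_\omega}_{D_\omega}\norm{v}_{D_\omega}=H_\omega,
\]
as already noted in Remark~\ref{rem:feasall}. Equality in Cauchy--Schwarz holds if and only if $v$ is a positive multiple of $D_\omega^{-1}\ccr_\omega$. With the normalization, this forces $v^*=D_\omega^{-1}\ccr_\omega/H_\omega$. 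Note that $H_\omega>0$ by Remark~\ref{rem:feasall}, since $H_\omega>\gamma>0$.

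\textbf{Step 2 (outer minimization in $t$).} The problem reduces to $\min_{t\in[0,T]} t(\gamma-H_\omega)$. Since $H_\omega>\gamma$ for every nonempty $\omega$ (Remark~\ref{rem:feasall} under Assumption~\ref{assump:feas}), the coefficient is strictly negative. The minimum is therefore attained only at $t=T$, with value $T(\gamma-H_\omega)<0$. In particular $u=\mathbf{0}$ is not optimal. Combining the two steps gives $u^*=Tv^*=\frac{T}{H_\omega}D_\omega^{-1}\ccr_\omega=\pi(\omega)$.

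\textbf{Step 3 (uniqueness).} This is the only point requiring care, because the objective is convex but not strictly convex: it is positively homogeneous. Suppose $u$ is any minimizer. By Step 2 it must have $\norm{u}_{D_\omega}=T$, since any $t<T$ gives a strictly larger value. Its direction $v=u/T$ must then achieve $\tpose{\ccr_\omega}v=H_\omega$. By the equality case of Cauchy--Schwarz, together with the sign requirement (the inner product must equal $+H_\omega$), this forces $v=v^*$. Hence the minimizer is unique.

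As an alternative check, one could verify the KKT conditions, giving multiplier $\lambda=(H_\omega-\gamma)/(2T)$ for the constraint written as $\norm{u}_{D_\omega}^2\le T^2$. However, the direct argument above already delivers both optimality and uniqueness, so I would present that.
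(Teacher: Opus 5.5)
Your proof is correct. It takes a different route from the one the paper relies on.

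The paper gives no argument of its own here; it defers to \cite[Proposition~2]{Pinar2016}. The companion Proposition~\ref{prop:dual_solution_subproblem}, which the paper says follows that reference closely, shows the route taken: a KKT analysis of \eqref{prob:zp-rmvp2}. It first rules out an inactive constraint, using the subdifferential condition $\ccr_\omega\in\gamma\,\partial\norm{\cdot}_{D_\omega}(\mathbf{0})$ to exclude $\hat u=\mathbf{0}$. It then reads off from stationarity, $\ccr_\omega=(\gamma+\mu^*)D_\omega\hat u/\norm{\hat u}_{D_\omega}$, that $\hat u$ is a positive multiple of $D_\omega^{-1}\ccr_\omega$, and fixes the scale with $\norm{\hat u}_{D_\omega}=T$.

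You instead split magnitude from direction, exactly as in Remark~\ref{rem:feasall}. The direction is handled by Cauchy--Schwarz in the $D_\omega$-inner product, and the magnitude by minimizing a linear function of slope $\gamma-H_\omega<0$ on $[0,T]$.

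\textbf{What your route buys.}
\begin{itemize}
\item It is self-contained: no constraint qualification or subdifferential calculus is needed.
\item The nondifferentiable point $\mathbf{0}$ is disposed of simply by comparing the values $0$ and $T(\gamma-H_\omega)<0$.
\item Uniqueness, which cannot come from strict convexity since the objective is positively homogeneous (as you rightly note), follows cleanly from forcing $t=T$ and then the equality case of Cauchy--Schwarz.
\item It shows at a glance what happens when $H_\omega\le\gamma$: the minimizer collapses to $\mathbf{0}$, or, when $H_\omega=\gamma$, the whole segment $\{tv^*:0\le t\le T\}$ is optimal.
\item It returns the optimal value $T(\gamma-H_\omega)$ directly, which is what the bounding step evaluates for \eqref{prob:cprmvp2}.
\end{itemize}

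\textbf{What the KKT route buys.} It yields the multiplier $\mu^*=H_\omega-\gamma$ in the same pass, and the paper needs that multiplier for its Lagrangian-gradient branching rule. Your side remark recovers it: your $\lambda=(H_\omega-\gamma)/(2T)$ for the squared constraint corresponds to $\mu^*=2T\lambda=H_\omega-\gamma$ for the constraint written as $\norm{u}_{D_\omega}\le T$.
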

\begin{proof}
    A proof of this result is given in \cite[Proposition~2]{Pinar2016}.
\end{proof}
\begin{remark}
    For $\omega\subseteq\mathbb{I}_N$, we write $\Pi(\omega)=Z_\omega(\pi(\omega))$ for its zero-padding to $\mathbb{R}^N$, similarly as in Remark~\ref{rem:zpdef}.
\end{remark}
If $H_\omega<\gamma$, then $u=0$ is the unique optimal solution of~\eqref{prob:zp-rmvp2}, that is, all wealth is held in the risk-free asset. Assumption~\ref{assump:feas} excludes this degenerate case, since it guarantees $\min_{\omega\neq\emptyset} H_\omega > \gamma$ for every nonempty $\omega$.
As in the risk-minimization case (\S\ref{theory1}), the next proposition gives the optimal multiplier of the subproblem constraint and shows that the constraint is active at optimality. The multiplier is used in the branching rule of the algorithm.
\begin{proposition}\label{prop:dual_solution_subproblem}
For nonempty $\omega\subseteq \II_N$ the optimal Lagrange multiplier associated with the constraint of~\eqref{prob:zp-rmvp2} is $\mu^* = H_\omega- \gamma$. Moreover, the constraint is active at the optimal solution $\hat u$, \textit{i.e.}, $\norm{\hat{u}}_{D_\omega}=T$.
\end{proposition}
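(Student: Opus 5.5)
We prove Proposition~\ref{prop:dual_solution_subproblem} in three steps: show the constraint is active by contradiction, then read off the multiplier from the KKT stationarity condition at the closed-form optimizer $\pi(\omega)$ of Proposition~\ref{prop:xi2}.

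\textbf{Step 1: KKT conditions are valid.} The problem~\eqref{prob:zp-rmvp2} is convex: the objective $\gamma\norm{u}_{D_\omega}-\tpose{\ccr_\omega}u$ is convex and the feasible set $\cY_\omega$ is a closed $D_\omega$-ellipsoid. Slater's condition holds at $u=\mathbf 0$, since $\norm{\mathbf 0}_{D_\omega}=0<T$. Hence KKT conditions are necessary and sufficient, in subdifferential form. It is convenient to write the constraint as $\norm{u}_{D_\omega}-T\le 0$, which is the normalization under which the claimed value $\mu^*=H_\omega-\gamma$ should come out.

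\textbf{Step 2: the constraint is active.} Suppose $\norm{\hat u}_{D_\omega}<T$. Then complementary slackness gives $\mu^*=0$, so $\hat u$ is an unconstrained minimizer of the objective. But the objective is positively homogeneous. Take $v=(D_\omega)^{-1}\ccr_\omega/H_\omega$, which satisfies $\norm{v}_{D_\omega}=1$ and $\tpose{\ccr_\omega}v=H_\omega$. Then the objective along $tv$ equals $t(\gamma-H_\omega)$, and this tends to $-\infty$ as $t\to\infty$ because $H_\omega>\gamma$ (Remark~\ref{rem:feasall} together with Assumption~\ref{assump:feas}). So no unconstrained minimizer exists, a contradiction. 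One can argue more directly too: for any $u\neq\mathbf 0$ with $\norm{u}_{D_\omega}<T$ and negative objective value, scaling $u$ up to the boundary strictly decreases the objective, and $u=\mathbf 0$ is beaten by $Tv$. Either way, $\norm{\hat u}_{D_\omega}=T$, which is also consistent with $\norm{\pi(\omega)}_{D_\omega}=T$ computed directly from Proposition~\ref{prop:xi2}.

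\textbf{Step 3: the multiplier.} Since $\hat u=\pi(\omega)\neq\mathbf 0$, both norms are differentiable at $\hat u$. Stationarity reads
\[
\gamma\frac{D_\omega\hat u}{\norm{\hat u}_{D_\omega}}-\ccr_\omega+\mu\frac{D_\omega\hat u}{\norm{\hat u}_{D_\omega}}=\mathbf 0 .
\]
Proposition~\ref{prop:xi2} gives $D_\omega\hat u=(T/H_\omega)\ccr_\omega$ and $\norm{\hat u}_{D_\omega}=T$, so $D_\omega\hat u/\norm{\hat u}_{D_\omega}=\ccr_\omega/H_\omega$. Substituting, stationarity becomes $(\gamma+\mu-H_\omega)\ccr_\omega/H_\omega=\mathbf 0$. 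Because $\ccr_\omega\neq\mathbf 0$ (each entry is nonzero by Assumption~\ref{assump:feas}), this forces $\mu^*=H_\omega-\gamma$, which is positive, consistent with an active constraint.

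The only delicate point is the normalization of the constraint. If it were written as $\tpose{u}D_\omega u\le T^2$ instead, the multiplier would be $(H_\omega-\gamma)/(2T)$. We will therefore state explicitly that the constraint is taken in the form $\norm{u}_{D_\omega}\le T$, as in the definition of $\cY_\omega$.
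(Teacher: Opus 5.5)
Your proof is correct and follows essentially the same route as the paper: you rule out an inactive constraint via complementary slackness and $H_\omega>\gamma$, then read $\mu^*$ off the stationarity condition $\ccr_\omega=(\gamma+\mu^*)D_\omega\hat u/\norm{\hat u}_{D_\omega}$. The differences are minor: the paper excludes an interior optimum through first-order conditions (stationarity forces $H_\omega=\gamma$ if $\hat u\neq\mathbf 0$, and the subgradient condition at $\mathbf 0$ forces $H_\omega\le\gamma$) and obtains $\mu^*$ by taking the $(D_\omega)^{-1}$-norm of stationarity, whereas you use unboundedness of the positively homogeneous objective along $(D_\omega)^{-1}\ccr_\omega$ and substitute the closed form from Proposition~\ref{prop:xi2}.
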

\begin{proof}
The argument follows closely that of \cite[Proposition~2]{Pinar2016}. In addition, we establish that the restricted constraint $\norm{u}_{D_\omega}\leq T$ of~\eqref{prob:zp-rmvp2} is active at optimality. Suppose that the constraint is inactive at $\hat u$. Complementary slackness then gives $\mu^*=0$, so that $\hat u$ minimizes the unconstrained convex function $u\mapsto\gamma\norm{u}_{D_\omega}-\tpose{\ccr_\omega}u$. If $\hat u\neq 0$, stationarity gives $\gamma D_\omega\hat u/\norm{\hat u}_{D_\omega}=\ccr_\omega$ and taking the $D_\omega^{-1}$-norm of both sides yields $H_\omega=\norm{\ccr_\omega}_{(D_\omega)^{-1}}=\gamma$. If $\hat u=0$, the optimality condition $\ccr_\omega\in\gamma\,\partial\norm{\cdot}_{D_\omega}(0)$ gives $H_\omega\leq\gamma$. Both conclusions contradict $H_\omega>\gamma$, which is guaranteed by Assumption~\ref{assump:feas}. Consequently, the optimal solution $\hat{u}$ of \eqref{prob:zp-rmvp2} satisfies $\norm{\hat{u}}_{D_\omega}=T$.

Stationarity at the nonzero optimal solution now gives $\ccr_\omega
=(\gamma+\mu^*) \frac{D_\omega\hat u}{\norm{\hat u}_{D_\omega}}.$
Taking the $D_\omega^{-1}$-norm and using $\norm{\hat u}_{D_\omega}=T$ yields
$H_\omega=\gamma+\mu^*$. Hence, $\mu^*=H_\omega-\gamma$.
 \end{proof}
\begin{lemma}\label{lemma:equiv2}
Problems \eqref{prob:zp-rmvp2} and \eqref{prob:mvp2-omega} are equivalent.
\end{lemma}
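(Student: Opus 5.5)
The plan mirrors the proof of Lemma~\ref{lemma5}. We show that the zero-padding operator $Z_\omega$ is a bijection from the reduced feasible set $\cY_\omega$ onto $\cY\cap K_\omega$, and that it preserves the objective value. Together these give equivalence in the strong sense: $u^*$ solves \eqref{prob:zp-rmvp2} if and only if $Z_\omega(u^*)$ solves \eqref{prob:mvp2-omega}, and the two optimal values coincide.

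First, the bijection. The map $Z_\omega:\RR^{\abs{\omega}}\to K_\omega$ is linear and injective, since it merely inserts zeros. It is surjective onto $K_\omega$, with inverse given by the restriction $x\mapsto x_\omega$. Remark~\ref{rem:whyworks} gives $\tpose{x}Dx=\tpose{x_\omega}D_\omega x_\omega$ for every $x\in K_\omega$. In norm form this reads $\norm{Z_\omega(u)}_D=\norm{u}_{D_\omega}$ for all $u\in\RR^{\abs{\omega}}$. Consequently $\norm{Z_\omega(u)}_D\le T$ holds exactly when $\norm{u}_{D_\omega}\le T$, so $Z_\omega$ maps $\cY_\omega$ bijectively onto $\cY\cap K_\omega$.

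Second, the objective. Because entries of $Z_\omega(u)$ outside $\omega$ vanish, the linear term satisfies $\tpose{\ccr}Z_\omega(u)=\tpose{\ccr_\omega}u$. Combining this with the norm identity above yields
\[
\gamma\norm{Z_\omega(u)}_D-\tpose{\ccr}Z_\omega(u)=\gamma\norm{u}_{D_\omega}-\tpose{\ccr_\omega}u .
\]
The two problems therefore have identical objective values on corresponding points, which establishes the equivalence.

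There is no genuine obstacle here; the only care needed is that the $D$-norm identity is the square root of the quadratic-form identity in Remark~\ref{rem:whyworks}, and that it holds because $Z_\omega(u)\in K_\omega\subseteq$ vectors with support contained in $\omega$. As a corollary, Proposition~\ref{prop:xi2} shows that $\Pi(\omega)$ is the unique solution of \eqref{prob:mvp2-omega}, since uniqueness transfers through the bijection.
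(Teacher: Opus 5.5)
Your proposal is correct and follows the paper's route: the paper's proof simply defers to the argument of Lemma~\ref{lemma5}, namely that $Z_\omega$ is a bijection between the restricted feasible sets and preserves the objective value. Your write-up makes explicit the two identities that the paper leaves implicit, $\norm{Z_\omega(u)}_D=\norm{u}_{D_\omega}$ and $\tpose{\ccr}Z_\omega(u)=\tpose{\ccr_\omega}u$.
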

\begin{proof}
The argument proceeds along the same lines as in Lemma~\ref{lemma5}.
\end{proof}
\subsection{(Local) Minimizers of \texorpdfstring{\eqref{prob:cprmvp2}}{P2}}
To analyze the local minimizers of~\eqref{prob:cprmvp2}, we express its
objective using the indicator function $\phi$; \textit{i.e.}, $\cG_\beta(x)=\gamma\norm{x}_D-\tpose{\ccr}x+\beta\sum_{i\in\sigma(x)}\phi(x[i])$. The following proposition, analogous to Proposition~\ref{choiceofrho}, identifies a neighborhood in which activating components outside the support
of a feasible point cannot decrease the objective of~\eqref{prob:cprmvp2}.
\begin{proposition}\label{choiceofrho2}
Let $\beta>0$ and $\hat{x}\in\cY\setminus\{0\}$. Define $\hat{\sigma}=\sigma(\hat{x})$ and
\begin{align*}
\rho:=\min\left\{\min_{i\in\hat{\sigma}}|\hat{x}[i]|,\ \frac{\beta}{\norm{\ccr}_1+\gamma\sqrt{N\norm{D}_2}+1}\right\}.
\end{align*}
Then $\rho>0$, and:
\begin{enumerate}[label=(\roman*)]
\item If $y\in B_\infty(0,\rho)$, then
$\sum_{i\in\II_N}\phi(\hat{x}[i]+y[i])=\sum_{i\in\hat{\sigma}}\phi(\hat{x}[i])+\sum_{i\in\hat{\sigma}^c}\phi(y[i])$.
\item If $y\in B_\infty(0,\rho)\cap(\RR^N\setminus K_{\hat{\sigma}})$, then $
\cG_\beta(\hat{x}+y)\geq \cG_\beta(\hat{x})$, with strict inequality whenever $\hat{\sigma}^c\neq\emptyset$.
\end{enumerate}
\end{proposition}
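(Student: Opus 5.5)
The plan follows the template of Proposition~\ref{choiceofrho}, replacing the quadratic term with the Lipschitz continuous function $x\mapsto\gamma\norm{x}_D-\tpose{\ccr}x$.

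\emph{Positivity of $\rho$.} Since $\hat x\neq 0$, the support $\hat\sigma$ is nonempty, so the first term of the minimum is a minimum of finitely many positive numbers. The second term is $\beta$ divided by a positive quantity. Hence $\rho>0$.

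\emph{Part (i).} This is purely combinatorial. For $i\in\hat\sigma$ and $y\in B_\infty(0,\rho)$ we have $\abs{y[i]}<\rho\le\abs{\hat x[i]}$, so $\hat x[i]+y[i]\neq0$ and $\phi(\hat x[i]+y[i])=1=\phi(\hat x[i])$. For $i\in\hat\sigma^c$ we have $\hat x[i]=0$, so $\phi(\hat x[i]+y[i])=\phi(y[i])$. Summing over $\II_N=\hat\sigma\cup\hat\sigma^c$ gives the identity.

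\emph{Part (ii).} Take $y\in B_\infty(0,\rho)$ with $y\notin K_{\hat\sigma}$. Then some $i\in\hat\sigma^c$ has $y[i]\neq0$, and part (i) gives
\[
\norm{\hat x+y}_0\ge\norm{\hat x}_0+1.
\]
For the continuous part, the triangle inequality gives $\gamma\norm{\hat x+y}_D-\gamma\norm{\hat x}_D\ge-\gamma\norm{y}_D$. Combining this with $\abs{\tpose{\ccr}y}\le\norm{\ccr}_1\norm{y}_\infty$ yields
\[
\cG_\beta(\hat x+y)-\cG_\beta(\hat x)\ge \beta-\gamma\norm{y}_D-\norm{\ccr}_1\norm{y}_\infty .
\]
Next we bound the $D$-norm: $\norm{y}_D\le\sqrt{\norm{D}_2}\,\norm{y}_2\le\sqrt{N\norm{D}_2}\,\norm{y}_\infty$. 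Therefore
\[
\cG_\beta(\hat x+y)-\cG_\beta(\hat x)\ge\beta-\bigl(\norm{\ccr}_1+\gamma\sqrt{N\norm{D}_2}\bigr)\norm{y}_\infty .
\]
Since $\norm{y}_\infty<\rho\le\beta/(\norm{\ccr}_1+\gamma\sqrt{N\norm{D}_2}+1)$, the subtracted term is strictly less than $\beta$. The difference is therefore strictly positive.

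The strict inequality holds whenever such a $y$ exists, which requires $\hat\sigma^c\neq\emptyset$. If $\hat\sigma^c=\emptyset$, the set $\RR^N\setminus K_{\hat\sigma}$ is empty and the claim is vacuous. As in the earlier remark, feasibility of $\hat x+y$ is never used.

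The only delicate step is the norm comparison $\norm{y}_D\le\sqrt{N\norm{D}_2}\norm{y}_\infty$, which is where the specific constant in $\rho$ comes from. The ``$+1$'' in the denominator only serves to make the inequality strict and to keep the denominator positive.
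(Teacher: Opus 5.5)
Your proof is correct and follows essentially the same route as the paper: split off the $\ell_0$ increment via part (i), bound the remaining terms using the triangle inequality for $\norm{\cdot}_D$, the estimate $\abs{\tpose{\ccr}y}\le\norm{\ccr}_1\norm{y}_\infty$, and the comparison $\norm{y}_D\le\sqrt{N\norm{D}_2}\,\norm{y}_\infty$, then conclude from $\norm{y_{\hat\sigma^c}}_0\ge 1$ and the choice of $\rho$. You also spell out two points the paper leaves implicit: why $\rho>0$, and why (ii) is vacuous when $\hat\sigma^c=\emptyset$ (the paper just calls that case trivial).
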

\begin{proof}
    We first prove $(i)$.  For $y\in B_\infty(0,\rho)$ we have $\norm{y}_\infty<\min_{i\in\hat{\sigma}}\abs{\hat{x}[i]}$. This implies $ \phi(\hat{x}[i]+y[i])=\phi(\hat{x}[i])$ for $i\in \hat{\sigma}$. For $i\in \hat{\sigma}^c$ we have $\phi(\hat{x}[i]+y[i])=\phi(y[i])$, which  gives the desired result.

        We now prove $(ii)$. Let $ y\in B_\infty(0,\rho)\setminus K_{\hat{\sigma}} $. Then
			\begin{align*}
				\cG_\beta(\hat{x}+y)&=\gamma\norm{\hat{x}+y}_D-\tpose{\ccr}(\hat{x}+y)+\beta\norm{\hat{x}+y}_0\\&
				=\cG_\beta(\hat{x})-\tpose{\ccr}y+\gamma(\norm{\hat{x}+y}_D-\norm{\hat{x}}_D)+\beta\sum_{i\in\hat{\sigma}^c}\phi(y[i])\\&
				\geq \cG_\beta(\hat{x})-\tpose{\ccr}y-\gamma\norm{y}_D+\beta\norm{y_{\hat{\sigma}^c}}_0\\&
				\geq\cG_\beta(\hat{x})-\norm{y}_\infty(\norm{\ccr}_1+\gamma\sqrt{N\norm{D}_2})+\beta\norm{y_{\hat{\sigma}^c}}_0.
			\end{align*}
					For $ \hat{\sigma}^c=\emptyset $, inequality is trivial. If not, $ \norm{y_{\hat{\sigma}^c}}_0\geq 1 $, and the given radius   provides the inequality.
\end{proof}
The next result establishes a correspondence, analogous to Proposition~\ref{prop:localmin} and Lemma~\ref{lemma8}, between local minimizers of~\eqref{prob:cprmvp2} and global minimizers of~\eqref{prob:mvp2-omega} for a given $\omega \subseteq \II_N$, and the proofs are omitted accordingly.
\begin{proposition}\label{prop:localmin2}
Let $\omega\subseteq\II_N$, $\omega\neq\emptyset$. For any $\beta>0$, the objective $\cG_\beta$ reaches a (local) minimum of \eqref{prob:cprmvp2} at $\Pi(\omega)$, with $|\sigma(\Pi(\omega))|\geq 1$ and $\sigma(\Pi(\omega))\subseteq \omega$.
Moreover, any nonzero (local) minimizer $\hat{x}$ of \eqref{prob:cprmvp2} satisfies $\hat{x}=\Pi(\sigma(\hat{x}))$.
\end{proposition}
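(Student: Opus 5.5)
The plan follows the template of Proposition~\ref{prop:localmin} and Lemma~\ref{lemma8}, now using Proposition~\ref{choiceofrho2} and the subproblem solutions of Proposition~\ref{prop:xi2}. There are two directions: (a) each $\Pi(\omega)$ is a local minimizer of \eqref{prob:cprmvp2}; (b) each nonzero local minimizer equals $\Pi$ of its own support.

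\textbf{Direction (a).} Fix a nonempty $\omega$. Set $\hat x=\Pi(\omega)$ and $\hat\sigma=\sigma(\hat x)\subseteq\omega$. By Proposition~\ref{prop:dual_solution_subproblem}, $\norm{\pi(\omega)}_{D_\omega}=T>0$, so $\hat x\neq 0$. Hence $|\sigma(\Pi(\omega))|\ge 1$ and $\hat x\in\cY\setminus\{0\}$, which means Proposition~\ref{choiceofrho2} applies and yields a radius $\rho>0$. Take any $y\in B_\infty(0,\rho)$ with $\hat x+y\in\cY$, and split into two cases.
\begin{enumerate}[label=(\roman*)]
\item If $y\notin K_{\hat\sigma}$, then $\cG_\beta(\hat x+y)\ge\cG_\beta(\hat x)$ directly by Proposition~\ref{choiceofrho2}(ii).
\item If $y\in K_{\hat\sigma}$, then by part (i) of that proposition $\norm{\hat x+y}_0=\norm{\hat x}_0$, so the $\ell_0$ terms cancel. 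Moreover $\hat x+y\in\cY\cap K_{\hat\sigma}\subseteq\cY\cap K_\omega$. Since $\hat x$ solves \eqref{prob:mvp2-omega} (Lemma~\ref{lemma:equiv2} together with Proposition~\ref{prop:xi2}), the continuous part $\gamma\norm{\cdot}_D-\tpose{\ccr}(\cdot)$ is at least its value at $\hat x$.
\end{enumerate}
Both cases give $\cG_\beta(\hat x+y)\ge \cG_\beta(\hat x)$, which establishes local optimality.

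\textbf{Direction (b).} Let $\hat x\neq 0$ be a local minimizer and set $\hat\sigma=\sigma(\hat x)\neq\emptyset$. On a small ball around $\hat x$, every point of $\cY\cap K_{\hat\sigma}$ has support equal to $\hat\sigma$, because each nonzero entry stays bounded away from zero. So $\norm{\cdot}_0$ is constant there, and $\hat x$ is a local minimizer of the convex function $\gamma\norm{\cdot}_D-\tpose{\ccr}(\cdot)$ over the convex set $\cY\cap K_{\hat\sigma}$. For a convex problem, local optimality implies global optimality. Hence $\hat x$ solves \eqref{prob:mvp2-omega} with $\omega=\hat\sigma$. By uniqueness (Proposition~\ref{prop:xi2} with Lemma~\ref{lemma:equiv2}), $\hat x=\Pi(\hat\sigma)$.

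The main obstacle I expect is making sure the uniqueness claim in Proposition~\ref{prop:xi2} really holds, since the objective is only convex, not strictly convex. I would argue it directly. Proposition~\ref{prop:dual_solution_subproblem} shows every optimal $u$ satisfies $\norm{u}_{D_\omega}=T$ and attains $\tpose{\ccr_\omega}u=TH_\omega$. Equality in the Cauchy–Schwarz inequality $\tpose{\ccr_\omega}u\le \norm{\ccr_\omega}_{D_\omega^{-1}}\norm{u}_{D_\omega}$ then forces $u$ to be a positive multiple of $D_\omega^{-1}\ccr_\omega$. That pins $u$ down as $\pi(\omega)$, which secures the uniqueness used in both directions.
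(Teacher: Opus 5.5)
Your proof is correct and follows the route the paper intends. The paper omits the proof, saying it is analogous to Proposition~\ref{prop:localmin} and Lemma~\ref{lemma8}; that template is exactly what you carry out: the neighborhood result of Proposition~\ref{choiceofrho2} plus optimality of $\Pi(\omega)$ on $\cY\cap K_\omega$ gives local minimality, and restricting a nonzero local minimizer to the convex problem on $K_{\sigma(\hat x)}$, followed by uniqueness, gives the converse. Your Cauchy--Schwarz check that every optimal $u$ has $\norm{u}_{D_\omega}=T$ and $\tpose{\ccr_\omega}u=TH_\omega$, which forces $u=\pi(\omega)$, is a sound supplement to the uniqueness claimed in Proposition~\ref{prop:xi2}, since the objective there is convex but not strictly convex.
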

\subsection{Global Minimizers of \texorpdfstring{\eqref{prob:cprmvp2}}{P2}}
We begin with an observation concerning the existence of an upper bound on the components of globally optimal portfolios.
\begin{remark}
    Let $\beta>0$, and let $\hat{x}$ be a global minimizer of \eqref{prob:cprmvp2}. In Proposition~\ref{prop:globup} we developed a non-trivial upper bound on the nonzero entries of a global minimizer of \eqref{prob:cprmvp}. For~\eqref{prob:cprmvp2}, the feasible region $\cY$ defined in~\eqref{eq:cY}  is compact. Thus, the corresponding bound is immediate. Indeed, every global minimizer $\hat x$ satisfies $\norm{\hat{x}}_D= T$ and $\norm{\hat{x}}_\infty\leq \frac{T}{\sqrt{s_N(D)}}<\infty$.
\end{remark}
We proceed by establishing lower bounds on the nonzero components of a globally optimal portfolio.
\begin{theorem}\label{thm:globup2}
    Let $\beta>0$ and suppose $\hat{x}$ is a global minimizer of~\eqref{prob:cprmvp2}. If $\hat{\sigma}=\sigma(\hat{x})$ denotes the support of $\hat{x}$, then for every $i\in\hat{\sigma}$, we have
   \begin{align*}
    \abs{\hat{x}[i]}\geq \min\left\{\frac{\beta}{\abs{\ccr[i]}+H\sqrt{d_i[i]}},\frac{T}{\sqrt{d_i[i]}}\right\}.
\end{align*}
\end{theorem}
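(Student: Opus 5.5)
The plan splits on the cardinality of $\hat\sigma$, as in the proof of Theorem~\ref{thm:lower-bound}. The comparison, however, is made directly between objective values of support-restricted minimizers rather than through a perturbation along two coordinates.

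\textbf{Case $\abs{\hat\sigma}=1$.} By Proposition~\ref{prop:localmin2}, $\hat x=\Pi(\{i\})$. Proposition~\ref{prop:xi2} then gives $\hat x=\frac{T\,\mathrm{sign}(\ccr[i])}{\sqrt{d_i[i]}}e_i$, so $\abs{\hat x[i]}=T/\sqrt{d_i[i]}$, which is the second term of the minimum.

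\textbf{Case $\abs{\hat\sigma}\geq 2$.} Fix $i\in\hat\sigma$ and set $\sigma'=\hat\sigma\setminus\{i\}\neq\emptyset$.

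First, I compute $\cG_\beta$ at support minimizers. By Propositions~\ref{prop:xi2} and~\ref{prop:dual_solution_subproblem}, $\norm{\Pi(\omega)}_D=T$ and $\tpose{\ccr}\Pi(\omega)=TH_\omega$. Hence $\cG_\beta(\Pi(\omega))=-T(H_\omega-\gamma)+\beta\norm{\Pi(\omega)}_0$, with $\norm{\Pi(\omega)}_0\le\abs{\omega}$. Since $\Pi(\sigma')$ is feasible and $\hat x=\Pi(\hat\sigma)$ has exactly $\abs{\hat\sigma}$ nonzeros, global optimality $\cG_\beta(\hat x)\le\cG_\beta(\Pi(\sigma'))$ yields
\begin{align*}
\beta\le T\,(H_{\hat\sigma}-H_{\sigma'}).
\end{align*}

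Next, I bound the Sharpe-ratio drop $H_{\hat\sigma}-H_{\sigma'}$ using the explicit test vector $y=\hat x-\hat x[i]e_i$, which is supported in $\sigma'$. By Cauchy--Schwarz in the $D_{\sigma'}$ geometry (equivalently Remark~\ref{rem:feasall}), $H_{\sigma'}=\max_{v\neq 0}\tpose{\ccr_{\sigma'}}v/\norm{v}_{D_{\sigma'}}$. This gives
\begin{align*}
H_{\sigma'}\ge \frac{\tpose{\ccr}y}{\norm{y}_D}\ge\frac{TH_{\hat\sigma}-\abs{\hat x[i]}\abs{\ccr[i]}}{T+\abs{\hat x[i]}\sqrt{d_i[i]}}.
\end{align*}
Here I used $\tpose{\ccr}\hat x=TH_{\hat\sigma}$ and the triangle inequality $\norm{y}_D\le T+\abs{\hat x[i]}\sqrt{d_i[i]}$. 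Note that $y\neq0$ because $\abs{\hat\sigma}\ge2$. If the numerator is negative, the inequality holds trivially since $H_{\sigma'}>0$.

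Subtracting this bound from $H_{\hat\sigma}$ gives
\begin{align*}
H_{\hat\sigma}-H_{\sigma'}\le\frac{\abs{\hat x[i]}\bigl(\abs{\ccr[i]}+H_{\hat\sigma}\sqrt{d_i[i]}\bigr)}{T+\abs{\hat x[i]}\sqrt{d_i[i]}}\le\frac{\abs{\hat x[i]}\bigl(\abs{\ccr[i]}+H\sqrt{d_i[i]}\bigr)}{T}.
\end{align*}
The last step uses $H_{\hat\sigma}\le H$, which follows from the same variational formula since zero-padded vectors are admissible in the full maximization. Combining with $\beta\le T(H_{\hat\sigma}-H_{\sigma'})$ gives $\abs{\hat x[i]}\ge\beta/(\abs{\ccr[i]}+H\sqrt{d_i[i]})$. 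Taking the minimum over the two cases yields the stated bound.

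\textbf{Main obstacle.} The delicate point is the comparison step. The naive competitor $\hat x-\hat x[i]e_i$ need not satisfy $\norm{\cdot}_D\le T$. I avoid this by comparing against $\Pi(\sigma')$, which is feasible by construction, and using $y$ only as a test vector in the variational formula for $H_{\sigma'}$. I also need to make sure that $\norm{\Pi(\sigma')}_0\le\abs{\hat\sigma}-1$, so that exactly one penalty $\beta$ is saved; this holds because $\sigma(\Pi(\sigma'))\subseteq\sigma'$.
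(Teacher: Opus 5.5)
Your proof is correct, but it organizes the $\abs{\hat\sigma}\ge 2$ case differently from the paper.

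The paper works with explicit competitors built from $g_i(\hat x)=\hat x-\hat x[i]e_i$:
\begin{itemize}
\item If $g_i(\hat x)$ is feasible, it is compared with $\hat x$ directly, which gives $\abs{\hat x[i]}\ge\beta/\abs{\ccr[i]}$.
\item If $\norm{g_i(\hat x)}_D>T$, it is rescaled to $\hat u=Tg_i(\hat x)/\norm{g_i(\hat x)}_D$. The subsequent estimate $(\norm{g_i(\hat x)}_D-T)\,\tpose{\ccr}g_i(\hat x)/\norm{g_i(\hat x)}_D\le(\norm{g_i(\hat x)}_D-T)H$ needs $\norm{g_i(\hat x)}_D-T>0$.
\end{itemize}
This sign requirement is why the paper splits into cases on $h(0)$.

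You instead compare with the reduced-support optimum $\Pi(\hat\sigma\setminus\{i\})$. Its objective value $-T(H_\omega-\gamma)+\beta\norm{\Pi(\omega)}_0$ is available in closed form, so the optimality comparison becomes exactly $\beta\le T(H_{\hat\sigma}-H_{\hat\sigma\setminus\{i\}})$. The direction $g_i(\hat x)$ then enters only through the scale-invariant ratio $\tpose{\ccr}y/\norm{y}_D$. An upper bound on $\norm{y}_D$ therefore suffices in every case, and no feasibility split is needed.

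What each route buys:
\begin{itemize}
\item Your argument is uniform and reads the threshold as a Sharpe-ratio drop. It also yields a slightly sharper intermediate estimate, $\beta\le T\abs{\hat x[i]}\bigl(\abs{\ccr[i]}+H_{\hat\sigma}\sqrt{d_i[i]}\bigr)/\bigl(T+\abs{\hat x[i]}\sqrt{d_i[i]}\bigr)$.
\item The paper's argument uses less closed-form information: it needs $\norm{\hat x}_D=T$ but not $\tpose{\ccr}\hat x=TH_{\hat\sigma}$. It also records the stronger bound $\beta/\abs{\ccr[i]}$ whenever $g_i(\hat x)$ is itself feasible.
\end{itemize}

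One presentational point. When $TH_{\hat\sigma}<\abs{\hat x[i]}\abs{\ccr[i]}$, the middle inequality of your displayed chain can genuinely fail. This case does occur for global minimizers, for example when two positively correlated assets are held with opposite signs. Then $\tpose{\ccr}y<0$, while $\norm{y}_D$ may be much smaller than $T+\abs{\hat x[i]}\sqrt{d_i[i]}$. In that case you should assert only the outer inequality $H_{\hat\sigma\setminus\{i\}}\ge\bigl(TH_{\hat\sigma}-\abs{\hat x[i]}\abs{\ccr[i]}\bigr)/\bigl(T+\abs{\hat x[i]}\sqrt{d_i[i]}\bigr)$. It holds trivially there because the right-hand side is negative, and it is all the rest of your argument uses.
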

\begin{proof}
    Let $\abs{\hat{\sigma}}\geq 2$ and  $i\in\hat{\sigma}$. We define $g_{i}:\RR^N\rightarrow \RR^N$ as $g_{i}(x)=x-x[i]e_i$ where $e_i$ is the canonical basis vector of the specified index. We introduce the function $f(t_i)=\cG_\beta(g_{i}(\hat{x})+t_ie_i)$. Finally, we define a feasibility function $h(t_i)=T-\norm{g_{i}(\hat{x})+t_ie_i}_D$.
   Since $\hat{x}$ is a global minimizer, it should be a minimizer of $(\cP^2_{\hat \sigma})$. As shown in Proposition~\ref{prop:dual_solution_subproblem}, the restricted constraint is active at $\hat{x}$. This implies that the main constraint is also active, yielding $h(\hat{x}[i])= 0$. Then, we have the following three cases:
   
   \textbf{Case I: } $h(0)\geq 0$, in this case we have $\norm{g_{i}(\hat{x})}_D\leq T$ and due to the global optimality of $\hat{x}$ we have
   \begin{align*}
       \gamma \norm{g_{i}(\hat{x})}_D-\tpose{\ccr}g_{i}(\hat{x})+\beta\norm{g_{i}(\hat{x})}_0\geq  \gamma \norm{\hat{x}}_D-\tpose{\ccr}\hat{x}+\beta\norm{\hat{x}}_0.
   \end{align*}
   These together imply $\abs{\hat{x}[i]}\geq\frac{\beta}{\abs{\ccr[i]}}$.
   
   \textbf{Case II: } $h(0)<0$ and $f(0)\geq f(\hat{x}[i])$. Then, similarly we have $\abs{\hat{x}[i]}\geq \frac{\beta}{\gamma\sqrt{d_i[i]}+\abs{\ccr[i]}}$.
   
   \textbf{Case III: } $f(0)<f(\hat{x}[i])$ and $h(0)<0$. Since $g_i(\hat{x})\neq 0$ we can define $\hat{u}=\frac{T}{\norm{g_i(\hat{x})}_D}g_i(\hat{x})$. $\hat{u}$ is a feasible point so we require $\cG_\beta(\hat{x})\leq \cG_\beta(\hat{u})$. Then we have
\begin{align*}
    \gamma T - \tpose{\ccr}\hat{x} + \beta\norm{\hat{x}}_0 \leq \gamma T - \tpose{\ccr}\hat{u} + \beta(\norm{\hat{x}}_0 - 1) \quad\Rightarrow\quad
    \beta \leq \tpose{\ccr}(\hat{x} - \hat{u}).
\end{align*}
Substituting the decomposition $\hat{x} = g_i(\hat{x}) + \hat{x}[i]e_i$ and the definition of $\hat{u}$, we obtain:
\begin{align*}
    \beta &\leq \tpose{\ccr}\left(g_i(\hat{x}) + \hat{x}[i]e_i - \frac{T}{\norm{g_i(\hat{x})}_D} g_i(\hat{x})\right) = \ccr[i]\hat{x}[i] + \left(1 - \frac{T}{\norm{g_i(\hat{x})}_D}\right)\tpose{\ccr}g_i(\hat{x}) \\
    &= \ccr[i]\hat{x}[i] + \left(\norm{g_i(\hat{x})}_D - T\right) \frac{\tpose{\ccr}g_i(\hat{x})}{\norm{g_i(\hat{x})}_D}.
\end{align*}
Since $h(0) < 0$, we have $\norm{g_i(\hat{x})}_D > T$, so the coefficient $(\norm{g_i(\hat{x})}_D - T)$ is positive. By the definition of the dual norm, we have $\frac{\tpose{\ccr}g_i(\hat{x})}{\norm{g_i(\hat{x})}_D} \leq H$. Applying this upper bound yields:
\begin{align*}
    \beta &\leq \ccr[i]\hat{x}[i] + (\norm{g_i(\hat{x})}_D - T)H.
\end{align*}
Next, using the triangle inequality $\norm{g_i(\hat{x})}_D \leq \norm{\hat{x}}_D + \norm{\hat{x}[i]e_i}_D$ and noting that $\norm{\hat{x}}_D=T$, we have $\norm{g_i(\hat{x})}_D - T \leq \norm{\hat{x}[i]e_i}_D = \abs{\hat{x}[i]}\sqrt{d_i[i]}$.
Substituting this back into the inequality and using $\ccr[i]\hat{x}[i] \leq \abs{\ccr[i]}\abs{\hat{x}[i]}$:
\begin{align*}
    \beta &\leq \abs{\ccr[i]}\abs{\hat{x}[i]} + \abs{\hat{x}[i]}\sqrt{d_i[i]}H = \abs{\hat{x}[i]} \left( \abs{\ccr[i]} + \sqrt{d_i[i]}H \right) \;\;\Rightarrow\;\;\abs{\hat{x}[i]} \geq \frac{\beta}{H\sqrt{d_i[i]} + \abs{\ccr[i]}}.
\end{align*}
We observe that this lower bound is suitable for all three cases above.
Finally, if $\abs{\hat{\sigma}}=1$ then we have closed form solutions for the specific entry $i\in\hat{\sigma}$
\begin{align*}
    \hat{x}[i]=\frac{T\mathrm{sign}(\ccr[i])}{\sqrt{d_i[i]}}\quad\Rightarrow\quad \abs{\hat{x}[i]}=\frac{T}{\sqrt{d_i[i]}}.
\end{align*}
Combining the bounds from the $|\hat{\sigma}|\geq 2$ and $|\hat{\sigma}|=1$ cases yields the desired result.
   \end{proof}
   \begin{remark}
       The lower bound in Theorem~\ref{thm:globup2} is simpler than the one in Theorem~\ref{thm:lower-bound}: it is linear in $\beta$, does not involve the pairwise quantities $\rho_i$, and depends on the data only through $\ccr[i]$, $d_i[i]$, $H$ and $T$. In our experiments it also separated the nonzero components of global minimizers from zero more clearly, which is the property exploited by the warm-start heuristic of Section~\ref{sec:BnBwarmstart}.
   \end{remark}
Because the feasible set is compact, the existence of a global minimizer can be verified more straightforwardly than in the earlier case. We first establish that the current objective function is lower semi-continuous for any selection of problem parameters.
   \begin{lemma}
       For $\beta>0$, the robust return maximization objective $\cG_\beta$ is lower semi-continuous.
   \end{lemma}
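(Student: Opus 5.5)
The plan is to split $\cG_\beta$ into a continuous part and the sparsity penalty, and to use that a sum of lower semicontinuous functions is lower semicontinuous. Write
\[
\cG_\beta(x)=\underbrace{\gamma\norm{x}_D-\tpose{\ccr}x}_{=:c(x)}+\beta\norm{x}_0 .
\]
The first term $c$ is continuous on $\RR^N$. Indeed, $x\mapsto\norm{x}_D=\sqrt{\tpose{x}Dx}$ is a norm because $D\succ0$, so it is Lipschitz: $\abs{\norm{x}_D-\norm{y}_D}\le\norm{x-y}_D\le\sqrt{\norm{D}_2}\norm{x-y}_2$. The linear form $x\mapsto\tpose{\ccr}x$ is also continuous.

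Next I would check that $\norm{\cdot}_0$ is lower semicontinuous, either by citing \cite[Proof of Proposition 4.3]{Nikolova2013} as in the existence theorem for \eqref{prob:cprmvp}, or by a direct argument. For the direct argument, write $\norm{x}_0=\sum_{i\in\II_N}\phi(x[i])$; it suffices that each $\phi$ is lower semicontinuous on $\RR$, since finite sums preserve lower semicontinuity and composing with the continuous coordinate map $x\mapsto x[i]$ preserves it as well. At $t\ne0$, $\phi$ is locally constant equal to $1$, so it is continuous there. At $t=0$ we have $\phi(0)=0\le\phi(s)$ for every $s$, so $\liminf_{s\to0}\phi(s)\ge\phi(0)$. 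Equivalently, in the spirit of Proposition~\ref{choiceofrho2}(i), for $x_k\to x$ one eventually has $\sigma(x)\subseteq\sigma(x_k)$, because entries bounded away from zero stay nonzero; hence $\liminf_k\norm{x_k}_0\ge\norm{x}_0$.

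To conclude, take any sequence $x_k\to x$. Then
\[
\liminf_k\cG_\beta(x_k)\ge\lim_k c(x_k)+\beta\liminf_k\norm{x_k}_0\ge c(x)+\beta\norm{x}_0=\cG_\beta(x),
\]
which uses $\beta>0$ so that scaling by $\beta$ preserves the inequality.

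There is no real obstacle. The only point that needs care is the support-inclusion argument for $\norm{\cdot}_0$, and that is elementary.
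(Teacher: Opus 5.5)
Your proof is correct and follows the same route as the paper. You split $\cG_\beta$ into the continuous part $\gamma\norm{x}_D-\tpose{\ccr}x$ plus $\beta\norm{x}_0$, use lower semicontinuity of $\norm{\cdot}_0$ (which the paper simply cites from \cite[Proof of Proposition 4.3]{Nikolova2013}), and conclude that the sum is lower semicontinuous; your direct check through $\phi$ and the support inclusion $\sigma(x)\subseteq\sigma(x_k)$ for large $k$ just makes that cited fact self-contained.
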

   \begin{proof}
      The function $\norm{\cdot}_0 : \RR^N \rightarrow \RR$ is lower semi-continuous~\cite[Proof of Proposition~4.3]{Nikolova2013}. Consequently, the objective function $\cG_\beta(x) = \gamma \norm{x}_D - \tpose{\ccr} x + \beta \norm{x}_0$ is also lower semi-continuous, as it is a sum of lower semi-continuous functions.
   \end{proof}
   \begin{theorem}
	Let $\beta>0$. Then the set of global minimizers of $\cG_\beta$,
	\begin{align*}
		\hat{Y}=\left\{\hat{x}\in\cY\;:\;\cG_\beta(\hat{x})=\min_{x\in\cY}\cG_{\beta}(x)\right\}
	\end{align*}
	is nonempty.
\end{theorem}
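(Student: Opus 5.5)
The plan is to apply the Weierstrass-type existence theorem for lower semicontinuous functions on compact sets, using the lemma just established. First, I will verify that the feasible set $\cY=\{x\in\RR^N:\norm{x}_D\leq T\}$ from~\eqref{eq:cY} is nonempty, closed and bounded. Nonemptiness is immediate because $\mathbf{0}\in\cY$; alternatively, the singleton portfolio $\frac{T\,\mathrm{sign}(\ccr[i])}{\sqrt{d_i[i]}}e_i$ from the remark following Assumption~\ref{assum:feas2} also lies in $\cY$. Closedness follows because $\cY$ is the preimage of $(-\infty,T]$ under the continuous map $x\mapsto\norm{x}_D$. Boundedness follows from positive definiteness of $D$: for $x\in\cY$ we have $s_N(D)\norm{x}_2^2\leq \tpose{x}Dx\leq T^2$, so $\norm{x}_2\leq T/\sqrt{s_N(D)}$. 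Hence $\cY$ is compact.

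Second, I will invoke the preceding lemma, which states that $\cG_\beta$ is lower semicontinuous on $\RR^N$, and therefore on $\cY$. I will also note that $\cG_\beta$ is real-valued, hence proper, and bounded below on $\cY$. The lower bound follows from $\cG_\beta(x)\geq -\tpose{\ccr}x\geq -H\norm{x}_D\geq -HT$, by the dual-norm (Cauchy--Schwarz) inequality with $H=\sqrt{\tpose{\ccr}D^{-1}\ccr}$. Strictly speaking, lower semicontinuity together with compactness already gives this, so the estimate is only a sanity check.

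Third, I will conclude via \cite[Theorem~1.9]{rockafellar1998}, as in the proof for~\eqref{prob:cprmvp}. Setting $\tilde{\cG}=\cG_\beta+\delta_{\cY}$, the indicator $\delta_{\cY}$ of the closed set $\cY$ is lower semicontinuous, so $\tilde{\cG}$ is lower semicontinuous and proper. Its level sets are closed subsets of the compact set $\cY$, hence bounded, so $\tilde{\cG}$ is level-bounded and attains its minimum. Equivalently, one can take a minimizing sequence in $\cY$, extract a convergent subsequence by compactness with limit in $\cY$ by closedness, and apply lower semicontinuity at the limit. Therefore $\hat Y\neq\emptyset$.

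No step here is a genuine obstacle. The only point needing care is the closedness of $\cY$ and the lower semicontinuity of the $\ell_0$ term, and the latter is already supplied by the preceding lemma.
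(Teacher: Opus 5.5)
Your proposal is correct and takes the same approach as the paper, which simply cites the Weierstrass extreme value theorem for a lower semicontinuous function on the compact set $\cY$. Your check of compactness via $s_N(D)\norm{x}_2^2\le \tpose{x}Dx\le T^2$, and the minimizing-sequence (or \cite[Theorem~1.9]{rockafellar1998}) step, just spell out details the paper leaves implicit.
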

\begin{proof}
    This result follows directly from the extended form of the Weierstrass' extreme value theorem: a lower semicontinuous function attains its minimum over a compact feasible set.
\end{proof}
Finally, before concluding the theoretical developments, we observe that Proposition~\ref{prop:betak} and Remark~\ref{rem:noofsols} extend directly to the robust return maximization problem. Consequently, for any prescribed sparsity level $k$, there exists a regularization parameter $\beta_k > 0$ that ensures the desired sparsity level in globally optimal portfolios. Furthermore, globally optimal portfolios can be identified among $2^N - 1$ locally optimal candidates, where this count arises from considering only nontrivial support sets.

In the next section, we provide a brief description of the proposed algorithm and the associated heuristic procedures.
\section{Enumeration Based BnB Algorithm}\label{BBalgo}
In this section, we develop an enumeration-based branch-and-bound algorithm for~\eqref{prob:cprmvp} and~\eqref{prob:cprmvp2}, following the scheme of \cite{SenAkkayaPinar2025} for the nonrobust counterpart and adapting it to the robust setting through the bounds established in Sections~\ref{theory1} and~\ref{theory2}. We further refine the scheme with an additional pruning step in the bounding of right nodes, which allows an entire subtree to be replaced by a single leaf evaluation.

We denote by $P \subseteq\II_N$ the set of candidate assets that remain, whether or not the heuristic is applied, and decompose the problem into subproblems~\eqref{prob:mvp-omega} over support subsets $\omega \subseteq P$. Each subproblem characterizes the local minimizers supported on $\omega$ and corresponds to a node in the enumeration tree. At each node we solve the lower-dimensional equivalent subproblem \eqref{prob:zpmvp} and maintain a five-tuple $(x, lb, ub, P, S)$, whose components are as follows.
\begin{itemize}
    \item $x$ represents the solution obtained by solving \eqref{prob:zpmvp} for a nonempty support subset $\omega$. If $\omega=\emptyset$, then the node is pruned.
    \item $lb$ represents a \emph{lower score}, which is less than or equal to the upper bound at each node. For ease of reference, it will be referred to as the lower bound for the remainder of the paper.
    \item $ub$ represents an upper bound on the optimal value obtained from the corresponding node.
   \item $P$ denotes the set of candidate assets whose inclusion is still undecided, from which the branching asset is selected. 
    \item $S$ denotes the set of assets already fixed in the support at this node.
\end{itemize}
 Each node produced by the algorithm is inserted into a \emph{priority queue}, with its priority determined by the previously defined lower bounds. At the beginning of the next iteration, the node with the highest priority
(\textit{i.e.}, the lowest lower bound) is removed from the priority queue, and the
values of $x$, $lb$, $ub$, $P$, and $S$ are updated according to this node.
If multiple nodes share the same lowest lower bound, the node that was added
to the queue first is selected. The branching process then proceeds from this
chosen node, allowing the algorithm to systematically explore the search space. In the following subsections, we outline each step of the algorithm and present Algorithm~\ref{algo:pseudo1}, which summarizes the entire procedure.
All subroutines can be applied to \eqref{prob:cprmvp2} with minor modifications,
and in the following sections we present computational results for both problems.
\subsection{Warm-Start Heuristic}\label{sec:BnBwarmstart}
Computation time increases with larger $N$, because the algorithm may encounter many suboptimal solutions, and the number of such solutions affects efficiency. To address this, a warm-start heuristic is introduced to reduce memory usage and problem size.

The method first solves the problem with the full support set and determines a conservative elimination level using componentwise lower bounds. Then, for each asset $i\in P$, we compute the deficit $b[i]-|u_P[i]|$ between the componentwise lower bound $b[i]$ of Theorem~\ref{thm:lower-bound} and the weight $u_P[i]$ that asset $i$ receives in the solution $u_P=\xi(P)$ on the full candidate set, and remove the assets with the largest deficits from the candidate support set, thereby reducing the dimension of the problem early in the process. For instance, if the goal is a 10-sparse solution among 60 variables, about 30\% of variables might be eliminated rather than removing all but 10, in order to avoid excluding potentially optimal components.

This approach speeds up computation but introduces a trade-off between solution quality and runtime, so the elimination level must be chosen carefully. Guidance for this choice can come from sparsity information provided by Proposition~\ref{prop:betak}. If no warm-start is applied, that is, if no support is eliminated,  the method reduces to the full branch-and-bound algorithm. The validity of the associated bounding and pruning scheme follows from the generic enumeration argument in~\cite[Appendix~B]{SenAkkayaPinar2025}; consequently, when the algorithm terminates, the returned incumbent is globally optimal up to the prescribed stopping tolerance. The heuristic is most beneficial when the true solution is sparse, which aligns with portfolio optimization practice, since sparse portfolios reduce transaction costs.
\subsection{Branching}
When a node is taken from the queue for examination, the first step is to determine the most promising asset by analyzing the gradient of the Lagrangian function of \eqref{prob:zpmvp}. This branching strategy evaluates the quality of the current solution and aims to improve it. If the selected asset set at a node is empty, the procedure is modified by choosing the index that minimizes the variance-to-return ratio.

For branching, we represent the Lagrangian function $L$ by associating a Lagrange multiplier with the constraint defining $\cX_\omega$ and compute its gradient $(\nabla L)$ using Proposition~\ref{prop:dual_solution}. We then identify the most promising candidate among the set of possible assets. If the set of selected assets is nonempty, \textit{i.e.}, $S\neq\emptyset$, we select
$
j \in \arg\max_{i\in P} \abs{\nabla L_i}.
$
In this case, rather than using the covariance matrix $D_P$, we extract the submatrix $D_{P,S}$, whose rows correspond to indices in $P$ and columns correspond to indices in $S$, ensuring dimensional compatibility in the gradient computation.

If $S=\emptyset$, the gradient rule is unavailable, since the submatrix $D_{P,S}$ is empty. In this case, we apply an alternative branching rule based on the variance-to-return ratio of assets in $P$, and select
$
j \in \arg\min_{i\in P} \mathrm{diag}(D_P)[i] / \ccr_{P}[i].
$
This rule favors assets with relatively low variance and high return.

In both cases, an asset $j$ is selected from the candidate set $P$, and the algorithm generates two distinct nodes: a \emph{left} node in which the chosen asset is included in the support
$
S^L \leftarrow S \cup \{j\},
$
and a \emph{right} node in which the asset is excluded from the support
$
P^R \leftarrow P \setminus \{j\}$, $
S^R \leftarrow S \cup P^R
$.
As a result, at every iteration the algorithm identifies the asset that appears most influential for the portfolio, which helps accelerate convergence relative to a standard BnB procedure. The primary objective is to obtain solutions with as few nonzero components as possible, that is, with small support sets.
\subsection{Bounding}
At the start of each iteration, the algorithm removes from the queue the tuple with the highest priority and updates the corresponding values $(x, lb, ub, P, S)$.

For left nodes, an upper bound is obtained by solving the subproblem with fixed support $S^L$ and evaluating an upper estimate using the objective value of \eqref{prob:cprmvp} at the resulting solution, $ub^L=\tpose{\xi(S^L)}D_{S^L}\xi(S^L)+\beta|{S^L}|$. The lower bound of a left node is computed by adding the sparsity penalty $\beta$ to the parent node’s lower bound, $lb^L=lb+\beta$. Depending on the triviality of the support, the node is then added to the queue with the updated bounds.

For right nodes, the upper bound is inherited from the parent node because the branching step excludes an asset from the candidate set without generating a new feasible solution with fixed support. The lower bound is obtained from the relaxed subproblem of type \eqref{prob:zpmvp}. At initialization, a global lower bound is obtained by solving the problem with $\omega=\II_N$, since this formulation omits the sparsity penalty and considers all indices. Following \cite{SenAkkayaPinar2025}, after excluding the branching asset, the right child is assigned the bounds
\begin{align}\label{eq:right_lb}
     ub^R=ub,
    \qquad
    lb^R = \xi(S^R)^\top D_{S^R}\xi(S^R) + \beta \abs{S}.
\end{align}
In the scheme of \cite{SenAkkayaPinar2025}, the right child is then enqueued. Here, before enqueuing the right child, we apply the following additional pruning step. We first check whether
\[
    lb^R+\beta \geq ub^*-\epsilon,
\]
where $ub^*$ denote the objective value of the incumbent, that is, the best feasible solution found so far, and $\epsilon$ is the prescribed optimality tolerance. 
If this condition holds, then the nonterminal descendants of the right subtree are pruned. The following proposition justifies this additional pruning step.
\begin{proposition}\label{prop:pruning}
    Consider \eqref{prob:cprmvp} and let a node be represented by the current support set $S$ and candidate set $P$. Let $j\in P$, $P^R=P\setminus\{j\}$, and $S^R=S\cup P^R$. Then $lb^R$ as defined in~\eqref{eq:right_lb} satisfies the following property: if $lb^R+\beta \geq ub^*-\epsilon$, then every feasible portfolio $x\in\cX$ such that $S\subsetneq \sigma(x)\subseteq S^R$ satisfies $\cF_\beta(x)=\tpose{x}Dx+\beta\|x\|_0 \geq ub^*-\epsilon$. Consequently, every descendant support $\tilde{S}$ with $S\subsetneq \tilde{S}\subseteq S^R$ may be pruned.
\end{proposition}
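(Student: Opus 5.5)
The argument splits $\cF_\beta(x)$ into its two terms and bounds each separately. The variance term is controlled by a monotonicity (relaxation) argument over nested supports, and the penalty term by a cardinality count.

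Fix a feasible $x\in\cX$ with $S\subsetneq\sigma(x)\subseteq S^R$.

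\emph{Variance term.} Since $\sigma(x)\subseteq S^R$, we have $x\in\cX\cap K_{S^R}$, so $x$ is feasible for $(\cP^1_{S^R})$. By Remark~\ref{rem:unique} and Lemma~\ref{lemma5}, $\Xi(S^R)$ is the unique minimizer of that subproblem, and its optimal value equals $\xi(S^R)^\top D_{S^R}\xi(S^R)$. Therefore
\[
\tpose{x}Dx \ge \xi(S^R)^\top D_{S^R}\xi(S^R).
\]
Here $S^R\supseteq\sigma(x)\neq\emptyset$, so $S^R$ is nonempty and Proposition~\ref{prop:xi} applies.

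\emph{Penalty term.} The inclusion $S\subsetneq\sigma(x)$ is strict and the sets are finite, so $\|x\|_0=|\sigma(x)|\ge |S|+1$. Hence $\beta\|x\|_0\ge\beta|S|+\beta$.

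\emph{Combining.} Adding the two bounds gives
\[
\cF_\beta(x)\ge \xi(S^R)^\top D_{S^R}\xi(S^R)+\beta|S|+\beta=lb^R+\beta\ge ub^*-\epsilon,
\]
where the last step uses the hypothesis of the proposition.

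\emph{Pruning consequence.} Every descendant support $\tilde S$ in the right subtree with $S\subsetneq\tilde S\subseteq S^R$ yields, at its leaf, a candidate $\Xi(\tilde S)$. This candidate lies in $\cX$ by Remark~\ref{rem:feasall}. The only exception is that its support may be strictly smaller than $\tilde S$ if some entries vanish, so this case needs care. If $\sigma(\Xi(\tilde S))$ still strictly contains $S$, the bound above applies directly. Otherwise, I would evaluate $\cF_\beta$ with the penalty $\beta|\tilde S|\ge\beta(|S|+1)$ that the algorithm assigns at that node, and combine it with the same monotonicity bound. This is valid because the node's score uses $\beta|\tilde S|$, and the minimal variance over $K_{\tilde S}$ is at least the minimal variance over the larger set $K_{S^R}$.

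In either case no descendant can improve on the incumbent by more than $\epsilon$, so the subtree can be discarded. The node $\tilde S=S$ itself is excluded from the statement and is handled separately as the single leaf evaluation.

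The only delicate point is this support-collapse issue. Everything else is the elementary observation that enlarging the allowed support relaxes the feasible set, so the minimum variance is monotone nonincreasing in the support.
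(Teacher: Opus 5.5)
Your proof is correct and follows the paper's argument: feasibility of $x$ for the relaxed subproblem on $S^R$ bounds the variance term below by $\xi(S^R)^\top D_{S^R}\xi(S^R)$, and the strict inclusion $S\subsetneq\sigma(x)$ gives $\|x\|_0\ge|S|+1$, so together they yield $\cF_\beta(x)\ge lb^R+\beta\ge ub^*-\epsilon$. Your additional handling of a possible support collapse of $\Xi(\tilde S)$ goes beyond the paper, which passes directly from the bound on feasible portfolios to the pruning conclusion, and it is sound because the node scores in the algorithm use $\beta|\tilde S|$ rather than $\beta\|\Xi(\tilde S)\|_0$.
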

\begin{proof}
    Let $x\in\cX$ satisfy $S\subsetneq \sigma(x)\subseteq S^R$, and define $\tilde{S}=\sigma(x)$. Since $\tilde{S}\subseteq S^R$, all nonzero components of $x$ lie in $S^R$. Therefore, the restriction of $x$ to the indices in $S^R$ is feasible for~\eqref{prob:zpmvp} with $\omega=S^R$. By optimality of $\xi(S^R)$, we obtain
    \[
        \tpose{x}Dx \geq \xi(S^R)^\top D_{S^R}\xi(S^R).
    \]
    Since $S\subsetneq \tilde{S}$, we also have $\|x\|_0 = |\tilde{S}| \geq |S|+1$. Hence, combining the two inequalities, we get
    \[
      \cF_\beta(x)=\tpose{x}Dx+\beta\|x\|_0 \geq \xi(S^R)^\top D_{S^R}\xi(S^R)+\beta(|S|+1)=lb^R+\beta.
    \]
    Thus, if $lb^R+\beta\geq ub^*-\epsilon$, then $\cF_\beta(x) \geq ub^*-\epsilon$ for every feasible $x\in\cX$ with $S\subsetneq \sigma(x)\subseteq S^R$. Therefore, no feasible portfolio associated with a descendant support $\tilde{S}$ satisfying $S\subsetneq \tilde{S}\subseteq S^R$ can improve the incumbent by more than $\epsilon$. Consequently, every such descendant support $\tilde{S}$ may be pruned.
\end{proof}
The effect of this pruning step can also be quantified. Indeed, consider the right child defined by branching on $j$, namely the node with candidate set $P^R=P\setminus\{j\}$ and current support $S$. Under the standard enumeration scheme~\cite{SenAkkayaPinar2025}, this node would be enqueued and explored. Since its support remains $S$ and only indices from $P^R$ remain available for future branching, every support generated in that subtree satisfies $S\subseteq \tilde S \subseteq S^R$. When $lb^R+\beta\geq ub^*-\epsilon$, Proposition~\ref{prop:pruning} shows that all supports with $S\subsetneq \tilde S \subseteq S^R$ may be pruned. The number of such supports is
\[
    \sum_{i=1}^{|P^R|}\binom{|P^R|}{i}=2^{|P^R|}-1.
\]
Therefore, the additional pruning step may remove an \emph{exponentially} large portion of the right subtree in a single iteration. An analogous statement holds for \eqref{prob:cprmvp2} after replacing $\tpose{x}Dx$ by $\gamma\|x\|_D-\tpose{\ccr}x$ and $\xi$ by $\pi$. The details are omitted for brevity.
\subsection{Termination}
The algorithm terminates when there are no remaining nodes to explore or when the best upper bound $ub^*$ found during the search matches the lower bound $lb$ of the current node within a user-defined tolerance. In this case, further exploration cannot yield a better solution, and the procedure stops early.
\RestyleAlgo{ruled}
\begin{algorithm}[!htbp]
\caption{A Branch-and-Bound Algorithm for \eqref{prob:cprmvp}}
\label{algo:pseudo1}
\parbox{\textwidth}{
\begin{minipage}[t]{0.48\textwidth}
\,\\
\textbf{Input:} tolerance $\epsilon=1E-8$, local and global upper bounds $ub=ub^* = \infty $, candidate support set $P = \left\{1,\ldots,N\right\}$, current support set $S = \emptyset $, fixed-support solution $u_{P}=\xi(P)$, lower bound $lb$ from solving \eqref{prob:zpmvp} with $\omega=P$, the corresponding componentwise lower bound $b$ from Theorem~\ref{thm:lower-bound}, and queue $q=\emptyset$. \\
\textbf{Warm-Start:} let $J_k\subseteq P$ be the set of indices corresponding to the $k$ largest values of $b[i]-|u_P[i]|$, and update $P \leftarrow P \setminus J_k$.
Enqueue $(u_P,lb,ub,P,S)$ into $q$. \\
\While{$q\neq \emptyset$ \textbf{and} $ub^* - lb > \epsilon $}{
Extract the highest-priority tuple $(x,lb,ub,P,S)$ from $q$.\\
\If{$ub<ub^*$}{Set $ub^* \gets ub$ and $x^* \gets x$.}
$q\gets\textsc{Branch\&Bound}(q,x,lb,ub,P,S)$.
}
\return{$ub^*$ and its corresponding solution $x^*$.}
\end{minipage}
\hfill
\begin{minipage}[t]{0.5\textwidth}
\,\\
\textbf{Subroutine:} $\textsc{Branch\&Bound}(q,x,lb,ub,P,S)$\\
\textbf{Input:} queue $q$ and node $(x,lb,ub,P,S)$.\looseness=-1\\
\If{$P\neq \emptyset$}{
\Branching{\\
\eIf{$\abs{S}\geq 1$}{Select $j\in\mathrm{argmax}_{i\in P} |\nabla L_i|$.}
{Select  $j\in\arg\min_{i\in P}\frac{\mathrm{diag}(D_P)[i]}{\ccr_P[i]} $.}
Left node: $S^L \gets S \cup \{j\}$.\\
Right node: $P^R \gets P\setminus \!\{j\}$, $S^R\gets P^R \cup S$.
}
\BoundingR{\\
\If{$\abs{S^R}\geq 1$}{$lb^R=\xi(S^R)^\top D_{S^R} \xi(S^R) + \beta \abs{S}$. \\
\If{$lb^R+\beta<ub^*-\epsilon$}{Enqueue $(x,lb^R,ub, P^R, S)$ into $q$.}
}
}
\BoundingL{\\
\If{$\abs{S^L}\geq 1$}{$ub^L = \tpose{\xi(S^L)} D_{S^L} \xi(S^L)+\beta \abs{S^L}$. \\
\!Enqueue ($\xi(\!S^L),lb +\beta,ub^L\!, P^R\!, S^L$) into $q$.}
}
}
\return{updated queue $q$.}
\end{minipage}
}
\end{algorithm}
\section{Computational Results}\label{computation}
The algorithms were evaluated using datasets sourced from \cite{CesaroneEtal2025} and \cite{AraratEtal2024}. The former includes daily price data (adjusted for dividends and stock splits) for the DowJones, EuroStoxx50, FTSE100, NASDAQ100, and S\&P 500 indices. The latter comprises ETF, Eurobonds, and Italian Bonds datasets, providing daily asset returns derived from prices and total returns adjusted for dividends and splits. Summary statistics of these datasets are presented in Table~\ref{tab:datasets}.

Experiments were run on a Linux cluster using a single CPU core and no GPU. The runs were carried out on nodes equipped with Intel Xeon Gold 6240 processors and 376 GiB RAM. The implementation used Python and Gurobi 13.0.2. The code used to generate all reported results is available at \url{https://github.com/ecyayla/robust-mean-variance-portfolio-optimization}. We benchmarked our method, enhanced with the warm-start heuristic, against Gurobi using the mixed-integer second-order cone reformulations of~\eqref{prob:cprmvp} and~\eqref{prob:cprmvp2}. For both approaches the stopping tolerance was set to $10^{-8}$: the relative MIP gap tolerance for Gurobi, and the tolerance $\epsilon$ in the termination test $ub^*-lb\leq\epsilon$ for the branch-and-bound algorithm. To ensure a fair comparison, the number of threads used by Gurobi was restricted to one and its time limit was set to 12 hours. In the benchmarking experiments reported in Tables~\ref{tab:results1}--\ref{tab:results2}, the return of the risk-free asset $r_c$ is set to $0.0002$, the target return $\bar{r}$ to $5\%$ above $r_c$, and $T=1$ for~\eqref{prob:cprmvp2}. Our support-wise analysis relies on Assumption~\ref{assump:feas}, which guarantees feasibility of every nonempty support-restricted subproblem of~\eqref{prob:cprmvp} and excludes degenerate support solutions of~\eqref{prob:cprmvp2}. We fix $\gamma=0.001$ and retain, before running either method, the assets satisfying $\abs{\ccr[i]}/\sqrt{d_i[i]}\leq\gamma$. The same retained investment universe is used by BnB and Gurobi, and the asset counts in Table~\ref{tab:datasets} report the resulting dimensions.

Tables~\ref{tab:results1}--\ref{tab:results2} summarize the computational performance of the proposed BnB algorithm and Gurobi. The columns labeled ``BnB CPU Time (s)'' and ``Gurobi CPU Time (s)'' report total CPU times in seconds. Entries marked with a dash (``-'') indicate that Gurobi did not certify optimality within 12 hours; its incumbent at termination is used to compute the reported sparsity and error. The ``Solution Sparsity'' column denotes the number of nonzero entries in the reported solution; when a single value is listed, both methods return solutions with the same sparsity, whereas when two values are listed, the first corresponds to BnB and the second to Gurobi. The column ``Drop Rate'' represents the proportion of assets removed during the warm-start phase, with 0 indicating that no elimination is performed. The column labeled ``Error (\%)'' reports the relative objective error $100(v_{\mathrm{BnB}}-v_{\mathrm{G}})/\abs{v_{\mathrm{G}}}$, where $v_{\mathrm{BnB}}$ and $v_{\mathrm{G}}$ denote the objective values of the solutions returned by BnB and by Gurobi at termination, respectively. For~\eqref{prob:cprmvp2}, the optimal value is sometimes negative, which is why the denominator carries an absolute value. Since Gurobi is run under a fixed time limit, this value should be interpreted relative to its best available feasible solution when optimality is not certified. A negative value therefore indicates that our BnB method obtained a strictly better objective than the solution Gurobi returned at the time limit. When Gurobi terminates before the time limit, it certifies global optimality up to its gap and feasibility tolerances; in that case, no method can produce a better objective value for the same instance beyond these tolerances. Accordingly, our method is not intended to improve upon such solutions, but rather to deliver high-quality solutions within reasonable computation times, particularly for larger instances. Finally, the column ``Node Reduction (\%)'' reports the percentage reduction in the number of nodes explored by the branch-and-bound algorithm due to the additional pruning step of Proposition~\ref{prop:pruning}, relative to the standard enumeration scheme of~\cite{SenAkkayaPinar2025}; a larger value indicates that a greater portion of the search tree is eliminated.

Table~\ref{tab:results1} presents the corresponding results for~\eqref{prob:cprmvp} across datasets of varying sizes. On the small-scale instances such as ItalianBonds, ETF, DowJones, and EuroStoxx50, both methods return identical solutions with no elimination (drop rate $0$); although the runtimes are small in absolute terms, BnB is already consistently faster than Gurobi. As the problem size increases (FTSE100 and NASDAQ100), the impact of the warm-start heuristic becomes more evident: by eliminating poorly performing assets it substantially reduces the problem size, and the returned solutions remain optimal (zero error) across all drop rates, with higher drop rates yielding the largest speedups and smaller drop rates increasing computation time. On FTSE100, Gurobi requires long computation times or fails to finish within the 12-hour limit, whereas BnB returns the same optimal solutions in seconds to a few minutes at higher drop rates, its runtime growing as the drop rate decreases. A similar pattern holds on NASDAQ100, where BnB outperforms Gurobi across all values of $\beta$ and drop rates while matching its optimal objective. For the largest dataset, S\&P500, Gurobi does not certify a solution within the time limit, while BnB returns solutions much faster but with a nonzero objective error (roughly $6\%$ to $14\%$); here smaller drop rates reduce the error at the expense of additional computation time. Overall, the results suggest that the warm-start elimination strategy removes poorly performing assets and reduces the problem size substantially, while preserving optimality on all instances except the largest, where a controllable trade-off between computation time and solution quality remains. Finally, the additional pruning step substantially reduces the search effort on every instance, removing between  20\%  and 80\% of the nodes explored by the enumeration scheme of~\cite{SenAkkayaPinar2025}.

Table~\ref{tab:results2} reports the results for~\eqref{prob:cprmvp2}, which follow a pattern comparable to that of~\eqref{prob:cprmvp}. On the smaller datasets (ItalianBonds, ETF, DowJones, and EuroStoxx50) no assets are eliminated and the two methods return nearly identical portfolios, with BnB several times faster than Gurobi. For EuroStoxx50 with $\beta=5\times10^{-4}$, the reported $-0.74\%$ error is due to numerical rounding rather than a genuine improvement over Gurobi. On the larger instances (FTSE100 and NASDAQ100), Gurobi no longer certifies optimality within the 12-hour limit, and the warm-start heuristic becomes decisive: higher drop rates yield larger speedups at a small cost in accuracy, whereas lower drop rates reduce the error at the expense of additional computation time. On FTSE100 the error even becomes negative at the lower drop rates, where BnB improves upon the solution Gurobi returns at the time limit (for $\beta=10^{-3}$, from $1.84\%$ at drop rate $0.8$ to $-1.39\%$ at $0.6$), while on NASDAQ100 it stays below $1.3\%$. For the largest dataset, S\&P500, BnB is again substantially faster, although the accuracy gap widens to between roughly $11\%$ and $24\%$ and narrows as the drop rate is reduced. Overall, the warm-start elimination substantially reduces the computation time of the larger instances and usually produces good-quality solutions, at the cost of some accuracy loss when the elimination is too aggressive. The node reduction is more modest here, ranging from 0\% to roughly 42\%, and it is largest on the instances whose optimal solutions are sparsest. This is not surprising because the pruning condition $lb^R+\beta\geq ub^*-\epsilon$ is harder to satisfy when the portfolios have larger supports. Indeed, the reported solutions of~\eqref{prob:cprmvp2} contain $5$--$271$ nonzero components, compared with only $1$--$8$ for~\eqref{prob:cprmvp}.
\begin{table}[!htbp]
\centering
\fontsize{10}{12}\selectfont
\caption{Dataset characteristics}
\label{tab:datasets}
\begin{tabular}{lccc}
\hline
\textbf{Index} & \textbf{Num. of assets} & \textbf{Days} & \textbf{Time interval} \\
\hline
ItalianBonds      & 11 & 1564  & 1/2013--12/2018 \\
ETF & 24 & 1042 &  1/2015--12/2018 \\
DowJones  & 28  & 4276 & 10/2006--2/2023 \\
EuroStoxx50      & 46 & 4276  & 10/2006--2/2023 \\
FTSE100      & 82 & 4276  & 10/2006--2/2023 \\
NASDAQ100      & 70 & 4276  & 10/2006--2/2023 \\
S\&P500      & 420 & 4276  & 10/2006--2/2023 \\
\hline
\end{tabular}
\end{table}
\begin{table}[!htbp]
\centering
\fontsize{10}{12}\selectfont
\setlength{\tabcolsep}{3.5pt}
\renewcommand{\arraystretch}{0.95}
\caption{Computational results comparison between BnB and Gurobi for~\eqref{prob:cprmvp}.}
\label{tab:results1}
\begin{tabular}{lcccrrcr}
\hline
\textbf{Dataset} & $\boldsymbol{\beta}$ &
\makecell{\textbf{Solution} \\ \textbf{Sparsity}} &
\makecell{\textbf{Drop} \\ \textbf{Rate}} &
\makecell{\textbf{BnB CPU} \\ \textbf{Time (s)}} &
\makecell{\textbf{Gurobi CPU} \\ \textbf{Time (s)}} &
\makecell{\textbf{Error} \\ \textbf{(\%)}} &
\makecell{\textbf{Node} \\ \textbf{Reduction (\%)}} \\
\hline
ItalianBonds & 5E-5 & 1 & 0 & 0.01 & 0.05 & 0 & 35.35 \\
        & 1E-5 & 2 & 0 & 0.03 & 0.06 & 0 & 20.22 \\
ETF     & 1E-4 & 2 & 0 & 0.06 & 0.27 & 0 & 57.43 \\
        & 5E-5 & 3 & 0 & 0.12 & 0.57 & 0 & 48.02 \\
DowJones & 5E-4 & 1 & 0 & 0.01 & 0.26 & 0 & 65.45 \\
        & 1E-4 & 2 & 0 & 0.02 & 0.42 & 0 & 71.30 \\
EuroStoxx50 & 5E-4 & 1 & 0 & 0.02 & 0.67 & 0 & 79.78 \\
        & 1E-4 & 3 & 0 & 0.52 & 3.28 & 0 & 73.20 \\
NASDAQ100 & 1E-4 & 2 & 0.5 & 1.20 & 56.24 & 0 & 70.30 \\
        & 1E-4 & 2 & 0.4 & 2.87 & 56.24 & 0 & 75.07 \\
        & 5E-5 & 3 & 0.5 & 5.44 & 264.82 & 0 & 61.92 \\
        & 5E-5 & 3 & 0.4 & 17.23 & 264.82 & 0 & 68.95 \\
        & 1E-5 & 8 & 0.5 & 132.76 & 35365.32 & 0 & 42.39 \\
        & 1E-5 & 8 & 0.4 & 1261.74 & 35365.32 & 0 & 44.01 \\
FTSE100 & 5E-5 & 3 & 0.7 & 0.24 & 365.39 & 0 & 47.97 \\
        & 5E-5 & 3 & 0.6 & 1.26 & 365.39 & 0 & 54.00 \\
        & 5E-5 & 3 & 0.5 & 3.02 & 365.39 & 0 & 57.60 \\
        & 1E-5 & 8 & 0.7 & 4.40 & - & 0 & 38.91 \\
        & 1E-5 & 8 & 0.6 & 66.27 & - & 0 & 41.48 \\
        & 1E-5 & 8 & 0.5 & 386.43 & - & 0 & 48.11 \\
S\&P500 & 5E-5 & 3 & 0.9 & 37.64 & - & 14.03 & 67.85 \\
        & 5E-5 & 3-8 & 0.85 & 377.10 & - & 6.68 & 74.52 \\
        & 1E-5 & 8-3 & 0.9 & 6412.85 & - & 5.82 & 49.17 \\
\hline
\end{tabular}
\end{table}
\begin{table}[!htbp]
\centering
\fontsize{10}{12}\selectfont
\setlength{\tabcolsep}{3.5pt}
\renewcommand{\arraystretch}{0.95}
\caption{Computational results comparison between BnB and Gurobi for~\eqref{prob:cprmvp2}.}
\label{tab:results2}
\begin{tabular}{lcccrrcr}
\hline
\textbf{Dataset} & $\boldsymbol{\beta}$ &
\makecell{\textbf{Solution} \\ \textbf{Sparsity}} &
\makecell{\textbf{Drop} \\ \textbf{Rate}} &
\makecell{\textbf{BnB CPU} \\ \textbf{Time (s)}} &
\makecell{\textbf{Gurobi CPU} \\ \textbf{Time (s)}} &
\makecell{\textbf{Error} \\ \textbf{(\%)}} &
\makecell{\textbf{Node} \\ \textbf{Reduction (\%)}} \\
\hline
ItalianBonds & 5E-3 & 5 & 0 & 0.04 & 0.11 & 0 & 0.00 \\
        & 1E-3 & 6 & 0 & 0.03 & 0.20 & 0 & 0.00 \\
ETF & 1E-3 & 14 & 0 & 0.62 & 5.98 & 0 & 15.04 \\
        & 5E-4 & 17 & 0 & 0.28 & 2.81 & 0 & 11.20 \\
DowJones & 1E-3 & 9 & 0 & 0.32 & 7.46 & 0 & 41.61 \\
        & 5E-4 & 12 & 0 & 0.34 & 6.03 & 0 & 30.53 \\
EuroStoxx50 & 5E-4 & 14-15 & 0 & 20.50 & 171.00 & -0.74 & 30.42 \\
        & 1E-4 & 26 & 0 & 15.16 & 138.85 & 0 & 19.52 \\
NASDAQ100 & 1E-3 & 17 & 0.5 & 538.27 & - & 0.48 & 29.25 \\
        & 5E-4 & 27-31 & 0.5 & 66.36 & - & 1.22 & 17.98 \\
        & 5E-4 & 30-31 & 0.4 & 1283.38 & - & 0.59 & 20.25 \\
FTSE100 & 1E-3 & 11-17 & 0.8 & 0.07 & - & 1.84 & 3.74 \\
        & 1E-3 & 14-17 & 0.7 & 2.82 & - & -0.10 & 13.78 \\
        & 1E-3 & 16-17 & 0.6 & 154.19 & - & -1.39 & 19.32 \\
        & 5E-4 & 13-28 & 0.8 & 0.02 & - & 6.45 & 0.00 \\
        & 5E-4 & 21-28 & 0.7 & 1.06 & - & 2.44 & 4.30 \\
        & 5E-4 & 23-28 & 0.6 & 20.56 & - & 0.61 & 10.46 \\
S\&P500 & 1E-3 & 22-37 & 0.9 & 657.65 & - & 14.41 & 25.49 \\
        & 5E-4 & 31-73 & 0.9 & 169.11 & - & 22.62 & 12.67 \\
        & 1E-4 & 58-222 & 0.85 & 8.55 & - & 24.25 & 0.27 \\
        & 1E-4 & 78-222 & 0.8 & 564.04 & - & 16.67 & 0.00 \\
        & 5E-5 & 80-271 & 0.8 & 9.79 & - & 19.90 & 0.03 \\
        & 5E-5 & 120-271 & 0.7 & 5448.01 & - & 10.94 & 0.00 \\
\hline
\end{tabular}
\end{table}
\subsection{Out-of-Sample Performance}\label{sec:oos}
We compare the robust ($\gamma>0$) and purely sparse ($\gamma=0$) models out of sample. We run a rolling-window backtest on the $100$ size and book-to-market-sorted assets from the Kenneth French Data Library\footnote{Data available at \url{https://mba.tuck.dartmouth.edu/pages/faculty/ken.french/data_library.html}.}
 (daily value-weighted returns, {July 2004 to June 2026}). At each rebalancing date, we estimate $D$ and $\ccr$ from an in-sample window of $252$ trading days and evaluate the resulting allocation out of sample over the next $21$ trading days. Since the estimates are recomputed at each date, Assumption~\ref{assump:feas} need not hold for the whole universe: assets with $\abs{\ccr[i]}/\sqrt{d_i[i]}\leq\gamma$ are removed from the candidate set before solving, and rebalancing dates at which no asset survives this screening are skipped. The same screening and the same set of rebalancing dates are used for the robust and the sparse model, so that the two are compared on identical out-of-sample periods. This leaves $215$ rebalances over the sample period. The risk-free return is set to $r_c=5\times10^{-5}$ and the target return to $\bar r = 1.05\,r_c$. Both models are solved by the branch-and-bound algorithm using the warm-start elimination of Section~\ref{sec:BnBwarmstart} at a drop rate of $0.3$.

We report the annualized out-of-sample Sharpe ratio, the annualized out-of-sample mean excess return, and the average number of selected assets $\abs{\sigma}$.
\begin{table}[!htbp]
\centering
\caption{{Out-of-sample robust ($\gamma>0$) versus sparse ($\gamma=0$)
performance on the Fama--French 100 Size$\times$Book-to-Market portfolios. Mean excess
returns are annualized and expressed in percent; $\abs{\sigma}$ is the average number of
selected assets.}}
\label{tab:oos}
{\setlength{\tabcolsep}{5pt}
\begin{tabular}{cc cc cc cc}
\hline
 & & \multicolumn{2}{c}{\textbf{Sharpe}} & \multicolumn{2}{c}{\textbf{Mean return (\%)}}
   & \multicolumn{2}{c}{$\boldsymbol{\abs{\sigma}}$} \\
$\boldsymbol{\beta}$ & $\boldsymbol{\gamma}$ & Sparse & Robust & Sparse & Robust
   & Sparse & Robust \\
\hline
$10^{-6}$ & $0.10$ & $0.3160$ & $0.4716$ & $0.0102$ & $0.0945$ & $1.06$ & $2.95$ \\
$10^{-6}$ & $0.15$ & $0.4505$ & $0.5721$ & $0.0111$ & $0.2905$ & $1.00$ & $2.94$ \\
$5\times10^{-7}$ & $0.10$ & $0.4308$ & $0.4312$ & $0.0133$ & $0.0862$ & $1.32$ & $3.75$ \\
$5\times10^{-7}$ & $0.15$ & $0.4987$ & $0.5704$ & $0.0119$ & $0.2896$ & $1.06$ & $3.38$ \\
\hline
\end{tabular}
}
\end{table}
Across the tested configurations the robust model attains higher out-of-sample Sharpe ratios and mean excess returns than the sparse model in most settings. We do not claim that the robust model dominates the sparse one in general. Nonetheless, these results suggest that the robustness term can improve out-of-sample performance, which makes robust sparsity a worthwhile alternative to purely sparse portfolio selection.
\section{Conclusion}\label{conclusion}
In this paper, we considered a mean–variance portfolio selection problem that accounts for uncertainty in expected returns through an ellipsoidal uncertainty set, while at the same time promoting sparsity via an $\ell_0$-penalty. Bringing these two aspects together leads to a problem that is both nonconvex and discontinuous, and therefore inherently difficult to solve. To better understand this structure, we carried out a detailed analysis of both local and global minimizers for the two formulations studied, namely the robust risk minimization and robust return maximization models. In doing so, we clarified how local minimizers relate to support-restricted subproblems, established existence results for global solutions, and derived explicit lower and upper bounds on their components.

These structural results guided the design of a tailored branch-and-bound algorithm. In particular, the bounds we obtained proved useful not only for pruning the search space, but also for constructing effective warm-start solutions. This combination plays an important role in keeping the computational effort manageable, even though the underlying problem is combinatorial due to the $\ell_0$ term.

Our computational study on real financial data suggests that the proposed approach performs competitively against general-purpose mixed-integer second-order cone programming solvers, and in many cases achieves better performance in terms of running time without sacrificing solution quality. At the same time, the portfolios obtained reflect the intended balance: they are both robust to estimation errors and sparse enough to be practically implementable.

Overall, the paper offers a unified perspective on combining robustness and exact sparsity in portfolio optimization under ellipsoidal uncertainty. There are several natural directions for future work, including the use of different uncertainty sets, the incorporation of additional practical constraints such as transaction costs or turnover limits, and the extension to alternative risk measures within the same framework.

\section*{Acknowledgments}
Buse \c{S}en would like to acknowledge support as a part of NCCR Automation, a National Centre of Competence in Research, funded by the Swiss National Science Foundation (grant number 51NF40\_225155).

\bibliographystyle{plainnat} 
\bibliography{biblio}
\end{document}